\documentclass{amsart}
\usepackage{graphicx}
\usepackage{amssymb,amscd,amsthm,amsxtra}
\usepackage{hyperref}
\usepackage{latexsym}
\usepackage{epsfig}
\usepackage{mathtools}
\usepackage{esint}
\usepackage{color}


\vfuzz2pt 
\hfuzz2pt 
\newtheorem{thm}{Theorem}[section]

\newtheorem{lem}[thm]{Lemma}

\theoremstyle{definition}
\newtheorem{defn}[thm]{Definition}
\theoremstyle{remark}
\newtheorem{rem}[thm]{Remark}
\numberwithin{equation}{section}

\newcommand{\R}{\mathbb R}

\newcommand{\p}{\partial}

\newcommand{\comment}[1]{}

\begin{document}
\title[Boundary Harnack Principle]{On the Parabolic Boundary Harnack Principle}
\author{D. De Silva}
\address{Department of Mathematics, Barnard College, Columbia University, New York, NY 10027}
\email{\tt  desilva@math.columbia.edu}
\author{O. Savin}
\address{Department of Mathematics, Columbia University, New York, NY 10027}\email{\tt  savin@math.columbia.edu}
\begin{abstract}We investigate the parabolic Boundary Harnack Principle by the analytical methods developed in \cite{DS1,DS2}. Besides the classical case, we deal with less regular space-time domains, including slit domains.
 \end{abstract}

\maketitle

\section{Introduction}

\subsection{Statement of main results.} In this paper we provide direct analytical proofs of the parabolic Boundary Harnack Inequality for both divergence and non-divergence type operators, in several different settings. Our strategy is based on our earlier works \cite{DS1, DS2} where the elliptic counterparts of these results were obtained. In order to state our theorems precisely, we introduce some notation.

We denote by $\Gamma \subset \R^{n+1}$ the graph of a continuous function $g(x',t)$ in the $x_n$ direction,  
$$\Gamma:=\{x_n=g(x',t) \}, \quad \quad (0,0) \in \Gamma,$$
while $\mathcal C_r$ denotes the cylinder of size $r$ on top of $\Gamma$ (in the $e_n$ direction) i.e.,  
 $$\mathcal C_r:=\{ (x',x_n,t)| x' \in B'_r, \quad  t \in (-r^2,r^2), \quad g(x',t)<x_n < g(x',t)+r\}.$$ As usual, $x'=(x_1, \ldots, x_{n-1})$, while $B'_r \subset \R^{n-1}$ is the ball of radius $r$ centered at the origin.

We consider solutions $u(x,t)$ to the parabolic equation
$$ u_t=Lu  \quad \text{in $\mathcal C_1,$}$$
where $Lu = tr (A(x) D^2u)$ or $L(u)=div(A(x) \nabla u)$, with $A$ satisfying, 
$$\lambda I \leq A \leq \Lambda I,\quad 0<\lambda \leq \Lambda <+\infty.$$

First, we recall the standard boundary Harnack inequalities for parabolic equations in Lipschitz domains. References to known literature will be provided in the next subsection. Here $g \in C_{x',t}^{\alpha,\beta}$ if 
$$ |g(x',t)-g(y',s)| \le C(|x'-y'|^\alpha + |t-s|^\beta),$$ and  $\overline E$, $\underline E$ are points interior to $\mathcal C_1$ at times $t=1/2$ and $t=-1/2$ respectively, 
$$\overline E = \left( \left(g\left(0, \frac 12\right) + \frac 12 \right) e_n, \frac 12 \right ) , \quad \quad \underline E = \left( \left(g\left(0, -\frac 12\right) + \frac 12 \right) e_n, -\frac 12 \right ).$$

\begin{thm}[$C^{1, \frac 12}$ domains]\label{T1}
Assume that $g \in C_{x',t}^{1, \frac 1 2}$ and $u, v$ are two positive solutions to 
$$u_t=Lu, \quad \quad v_t=Lv \quad \mbox{in $\mathcal C_1$,} $$ 
with $u$ vanishing continuously on $\Gamma$. Then
\begin{equation}\label{PBH}
\frac{u}{v} \ (x) \le C \, \,  \frac {u(\overline E)} {v(\underline E) } \quad \mbox{for all  $x \in \mathcal C_{1/2}$},
\end{equation}
with $C$ depending only on $n$, $\|g\|_{C^{1,1/2}}, \lambda,$ and $\Lambda$. 
\end{thm}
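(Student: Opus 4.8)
The plan is to \emph{sandwich} $u$ and $v$ between constant multiples of the linear weight $d(x,t):=x_n-g(x',t)$, which is positive on $\mathcal C_1$ and comparable to the parabolic distance of $(x,t)$ from $\Gamma$. The theorem follows once we establish the two one-sided estimates
\begin{equation}\label{sandwich}
u(x,t)\le C\,u(\overline E)\,d(x,t)\quad\text{and}\quad v(x,t)\ge c\,v(\underline E)\,d(x,t)\qquad\text{in }\mathcal C_{1/2},
\end{equation}
with $C,c>0$ depending only on $n$, $\|g\|_{C^{1,1/2}}$, $\lambda$, $\Lambda$: dividing the two inequalities and cancelling $d$ gives \eqref{PBH} immediately. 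This also accounts for the time levels of the base points, which at first sight look asymmetric: $\overline E$ sits at $t=\tfrac12$, later than all of $\mathcal C_{1/2}$, and $\underline E$ sits at $t=-\tfrac12$, earlier than all of $\mathcal C_{1/2}$, which is exactly the one-directional structure of the parabolic Harnack inequality that the two halves of \eqref{sandwich} require.

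For the upper half of \eqref{sandwich}: since $u$ vanishes continuously on $\Gamma$, the parabolic Carleson estimate, combined with a forward-in-time Harnack chain connecting the Carleson corkscrew points to $\overline E$, gives $u\le C_1\,u(\overline E)$ on $\mathcal C_{5/8}$. To upgrade this to linear decay, I would compare $u$, near each point of $\Gamma\cap\mathcal C_{1/2}$, with a supersolution $\phi$ of $\phi_t=L\phi$ on a parabolic cylinder of a fixed small size (depending only on $n,\|g\|_{C^{1,1/2}},\lambda,\Lambda$), chosen so that $\phi\ge u$ on its parabolic boundary --- using $u=0$ on $\Gamma$ and $u\le C_1 u(\overline E)$ elsewhere --- while $\phi\le C\,u(\overline E)\,d$ on the concentric cylinder of half the size. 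The comparison principle then gives $u\le C\,u(\overline E)\,d$ in a neighborhood of $\Gamma$ inside $\mathcal C_{1/2}$, and away from $\Gamma$ the bound is trivial since there $d$ is bounded below.

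The lower half of \eqref{sandwich} is dual. A forward-in-time Harnack chain from $\underline E$ yields $v\ge c_1\,v(\underline E)$ on $\{d\ge\rho_0\}\cap\mathcal C_{9/16}$ for a fixed $\rho_0$. Near each point of $\Gamma\cap\mathcal C_{1/2}$ I would then insert a subsolution $\psi$ of $\psi_t=L\psi$, defined on the slab between $\Gamma$ and $\{d=\rho_0\}$, with $\psi=0$ on $\Gamma$, $\psi\le c_1 v(\underline E)$ on the portion of the parabolic boundary where the Harnack chain controls $v$ from below, and $\psi\le 0$ on the remaining lateral and initial portions (arranged by subtracting suitable growing corrections in $x'$ and in $t$); on a concentric cylinder of half the size one wants $\psi\ge c\,v(\underline E)\,d$. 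Since $v>0$ in $\mathcal C_1$ and $\psi\to0$ at $\Gamma$, one has $\liminf(v-\psi)\ge0$ along $\Gamma$, so no continuity of $v$ up to $\Gamma$ is needed; the comparison principle gives $v\ge c\,v(\underline E)\,d$ near $\Gamma$, and away from $\Gamma$ one uses $v\ge c_1 v(\underline E)\ge c_1 v(\underline E)\,d$ since $d\le1$. Together with the previous step this proves \eqref{sandwich}. The whole argument is insensitive to whether $Lu=\mathrm{tr}(A D^2 u)$ or $Lu=\mathrm{div}(A\nabla u)$.

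Everything above except the two barriers is classical in both the divergence and non-divergence settings --- the parabolic Carleson estimate, interior Harnack, time-oriented Harnack chains, the comparison principle --- so the crux of the proof, and the only place where the $C^{1,1/2}$ regularity of $\Gamma$ is genuinely used, is the construction of $\phi$ and $\psi$; equivalently, the sharp boundary behavior $u\asymp u(\overline E)\,d$ and the lower bound $v\gtrsim v(\underline E)\,d$ in a space-time domain with $C^{1,1/2}$ lateral boundary. (A merely Lipschitz $\Gamma$ would \emph{not} suffice; that case is treated separately, by a compactness argument.) Following the method of \cite{DS1,DS2}, I would take $\phi$ and $\psi$ to be explicit perturbations of the profile $x_n-g_\rho(x',t)$, where $g_\rho$ is a mollification of $g$ at a scale comparable to the size of the barrier cylinder, with correction terms controlled by the modulus of continuity of $D_{x'}g$; the point of assuming $g\in C^{1,1/2}$ is that this modulus is $O(r^{1/2})$ at scale $r$, which is exactly what keeps $\phi$ a supersolution and $\psi$ a subsolution at that scale while still forcing the linear control on the concentric cylinder. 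Producing this Hopf-Oleinik-type estimate with constants depending only on the listed quantities is the main obstacle; granting it, the remainder is bookkeeping with Harnack chains and the comparison principle.
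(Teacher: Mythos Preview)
Your approach has a genuine gap rooted in a misreading of the regularity hypothesis. In this paper, $g\in C^{1,1/2}_{x',t}$ means
\[
|g(x',t)-g(y',s)|\le C\bigl(|x'-y'|+|t-s|^{1/2}\bigr),
\]
i.e.\ $g$ is merely \emph{Lipschitz} in $x'$ and $C^{1/2}$ in $t$ (the parabolic Lipschitz case); it does \emph{not} say that $D_{x'}g$ exists, let alone that it has a $C^{1/2}$ modulus. Your barrier construction explicitly relies on ``the modulus of continuity of $D_{x'}g$'' being $O(r^{1/2})$, and you even note that ``a merely Lipschitz $\Gamma$ would not suffice''---but that is precisely the setting of the theorem. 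For Lipschitz graphs the sandwich estimate \eqref{sandwich} is in general \emph{false}: already for the elliptic problem in a two-dimensional cone of half-opening $\theta$, the positive harmonic function vanishing on the boundary behaves like $r^{\pi/(2\theta)}$, not like $d$. Hence neither the supersolution $\phi$ giving $u\le C\,u(\overline E)\,d$ nor the subsolution $\psi$ giving $v\ge c\,v(\underline E)\,d$ can be produced with constants depending only on the Lipschitz norm of $g$, and the argument collapses at exactly the step you single out as ``the main obstacle.''

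The paper circumvents this entirely: rather than controlling $u$ and $v$ separately against $d$, it sets $w=C_1 v - c_1 u$ and proves an iterative \emph{almost positivity} lemma (if $w\ge M$ on $\mathcal A_r=\{d\ge\delta r\}\cap\mathcal C_r^-$ and $w\ge -1$ on $\mathcal C_r^-$, then the same holds at scale $r/2$ with the negative part shrunk by a fixed factor). This uses only the graph structure of $\Gamma$, interior Harnack, and the weak Harnack inequality for the subsolution $w^-$ (Theorem~\ref{whi4}); no Hopf--Oleinik barrier and no information on the boundary decay rate of either $u$ or $v$ is needed. Combined with the Carleson estimate for $u$ and interior Harnack for $v$, the iteration yields $w>0$ in $\mathcal C_{1/2}$, which is \eqref{PBH}.
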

\begin{figure}[h] 
\includegraphics[width=0.5 \textwidth]{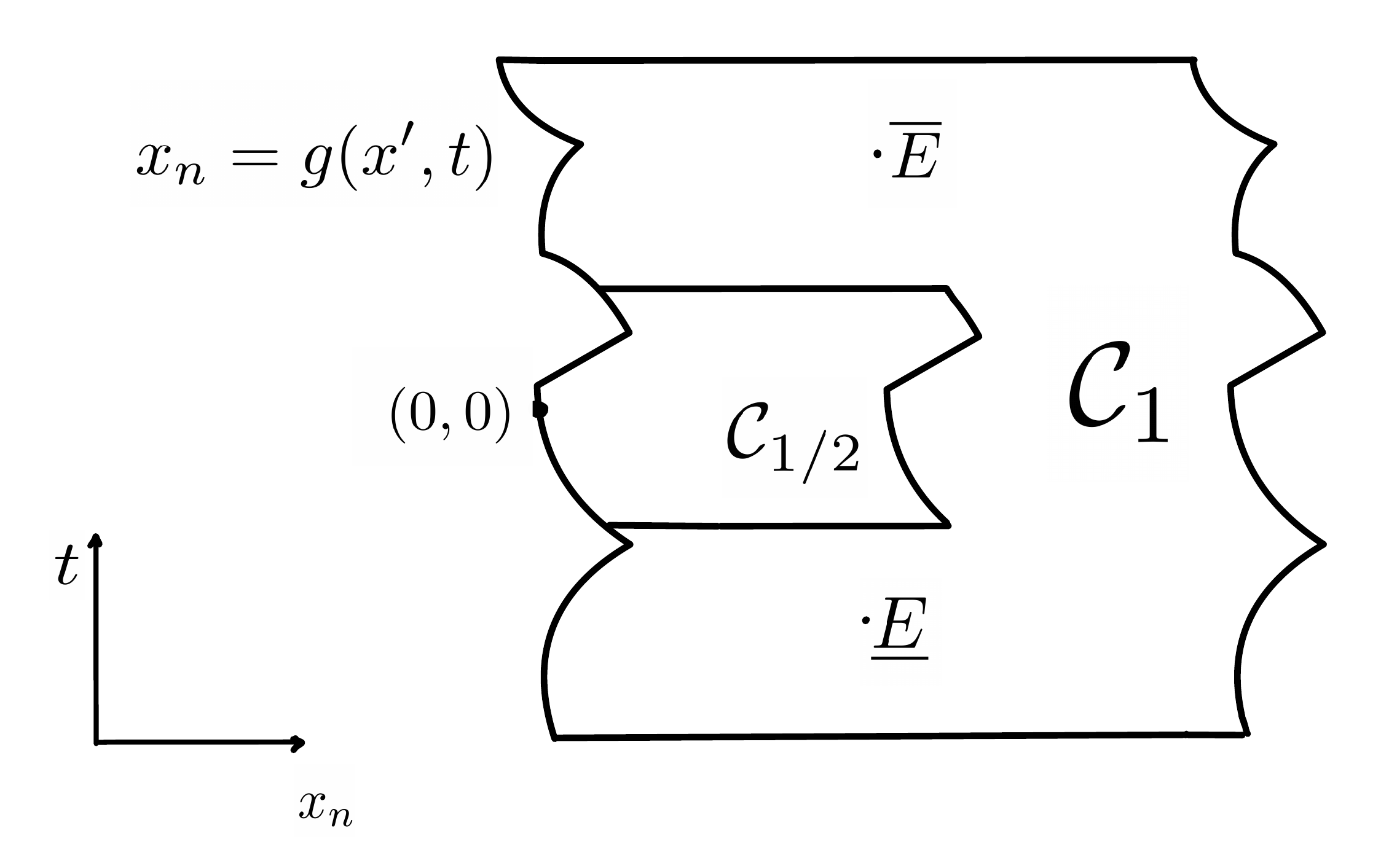}
\caption{Theorem \ref{T1}.} 
   \label{fig1}
\end{figure}

In this note, we provide new versions of Theorem $\ref{T1}$ in more general H\"older domains. 

\begin{thm}[$C^{\frac 12+, \frac 13+}$ domains] \label{T3}Theorem $\ref{T1}$ holds if $g \in C_{x',t}^{\alpha,\beta}$ with $\alpha > 1/2$, $\beta > 1/3$.
\end{thm}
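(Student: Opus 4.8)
\medskip
\noindent\textit{Sketch of the argument.} The plan is to run the perturbative scheme behind Theorem~\ref{T1} (i.e.\ the one of \cite{DS1,DS2}), rebuilding its geometric ingredients under the weaker hypothesis on $g$. Since only the one-sided bound \eqref{PBH} is required, normalize $u(\overline E)=v(\underline E)=1$. On the part of $\mathcal C_{1/2}$ at a fixed distance from $\Gamma$ the ratio $u/v$ is bounded by interior Harnack and standard interior parabolic estimates, $\overline E$ and $\underline E$ lying respectively in the future and the past of $\mathcal C_{1/2}$; so it suffices to bound $u/v$ in a fixed neighbourhood of $\Gamma$. By the maximum principle, this follows once we produce an \emph{upper barrier} $\overline\Phi$ — a supersolution of $w_t=Lw$ with $\overline\Phi\ge u$ on the lateral and top faces and $\overline\Phi\ge 0$ on $\Gamma$ — and a \emph{matching lower barrier} $\underline\Phi$ — a nonnegative subsolution vanishing continuously on $\Gamma$, with $\underline\Phi\le v$ on the lateral and top faces — such that $\overline\Phi\le C\,\underline\Phi$ in $\mathcal C_{1/2}$. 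Granting these, $u\le C\overline\Phi\le C^{2}\underline\Phi\le C^{2}v$, which is \eqref{PBH}.

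\medskip
\noindent The difficulty, and the origin of the exponents, is that for $\alpha<1$ (and possibly $2\beta<1$) the cylinder $\mathcal C_r$ is a tube of thickness $r$ whose axis wanders by $\gtrsim r^{\alpha}+r^{2\beta}\gg r$; thus $\mathcal C_1$ is not NTA, and the largest interior parabolic ball at parabolic height $h$ above $\Gamma$ has radius only $\rho(h)\sim\min\!\bigl(h^{1/\alpha},h^{1/(2\beta)}\bigr)\ll h$, forced by $\operatorname{osc}_{\rho}g<h$. Hence one cannot join a near-boundary point to $\overline E$ by a Harnack chain of bounded length, and the barriers must be assembled scale by scale. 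On $\mathcal C_{2^{-k}}$ one replaces $g$ by its parabolic mollification $g_k$ at scale $2^{-k}$ and builds one-scale sub/supersolutions modelled on $x_n-g_k(x',t)$, corrected so as to absorb the now-large quantities $|D'^2 g_k|\lesssim 2^{k(2-\alpha)}$ and $|\partial_t g_k|\lesssim 2^{k(2-2\beta)}$, the mismatch $|g-g_k|\lesssim 2^{-k\alpha}+2^{-2k\beta}$ being pushed into a vertical shift. Concatenating over $k$, the loss relative to the flat model accumulates as a geometric series whose ratio is governed by $\tfrac{1-\alpha}{\alpha}$ along spatial steps and by $\tfrac{1-2\beta}{2\beta}$ along temporal steps; it converges — so that $\overline\Phi\le C\,\underline\Phi$ and the scheme closes — precisely when $\tfrac{1-\alpha}{\alpha}<1$ and $\tfrac{1-2\beta}{2\beta}<\tfrac12$, i.e.\ $\alpha>\tfrac12$ and $\beta>\tfrac13$. (The asymmetry reflects that $t$ scales like length squared, so a $C^{\beta}_{t}$ modulus enters at effective exponent $2\beta$, while $\partial_t g$ and $D^2 g$ enter $w_t=Lw$ at the same order.)

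\medskip
\noindent The step I expect to be the real obstacle is the lower barrier $\underline\Phi$: a subsolution vanishing on the honestly wiggly surface $\Gamma$ which nevertheless stays above a fixed multiple of its flat value on the bulk of each $\mathcal C_{2^{-k}}$ — a quantitative Hopf lemma in a non-Lipschitz domain — and it is exactly here that the precise values of $\alpha$ and $\beta$ are used; the upper barrier, the first-paragraph reduction, and the final comparison are comparatively soft. For the divergence-form operator one may alternatively note that $w=u/v$ solves the degenerate equation $v^{2}w_t=\operatorname{div}(v^{2}A\nabla w)$ and invoke weighted De Giorgi--Nash--Moser theory, but the weight $v^{2}$ belongs to the parabolic Muckenhoupt class $A_{2}$ only under the same geometric control of the domain, so this route offers no genuine shortcut.
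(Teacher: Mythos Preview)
Your route is genuinely different from the paper's, and the step you yourself flag as ``the real obstacle'' is exactly where the sketch does not close.

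The paper builds no barriers at all. It works directly with $w=C_1v-c_1u$ and proves an \emph{iterative almost-positivity lemma} (Lemma~\ref{ML2}): with $\mathcal A_r=\{h_\Gamma\ge r^{\beta}\}\subset\mathcal C_r^-$ for an auxiliary $\beta>1$, if $w\ge f(r):=e^{C_0r^{\gamma}}$ on $\mathcal A_r$ (here $\gamma=\beta(1-1/\alpha)<0$) and $w\ge -1$ on $\mathcal C_r^-$, then the same holds at scale $r/2$ up to a factor $a(r)$. The first bound comes from a Harnack chain of $\sim r^{\gamma}$ cubes of size $\bar r\sim r^{\beta/\alpha}$; the second from weak Harnack applied to the extension of $w^-$ by $0$ in cubes of size $r^{\beta}$, giving $w^-\le e^{-cr^{1-\beta}}$. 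The scheme closes iff one can pick $\beta>1$ with $1-\beta<\gamma$, which is possible precisely when $\alpha>1/2$; the time threshold $\alpha/(1+\alpha)>1/3$ rides along. A companion Carleson estimate (Lemma~\ref{CE2}) bounding $\|u\|_{L^\infty(\mathcal C_{1/2})}$ by $u(\overline E)$ is proved by the same mechanism, after the preliminary chain bound $u\le e^{C_1h_\Gamma^{1-1/\alpha}}$.

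In your outline two things are missing. First, for $L$ with merely bounded measurable $A(x)$ there is no way to ``correct'' $x_n-g_k(x',t)$ into a genuine sub/super-solution: the defect $-\partial_tg_k+\operatorname{tr}(A\,D^2g_k)$ is of the size you compute, but absorbing it would require either an explicit auxiliary function with controlled $L$-image (unavailable when $A$ is only measurable) or a solved Dirichlet problem whose boundary behaviour is precisely what is at stake. This is why the paper's argument touches the equation only through interior/weak Harnack and never writes down a barrier. Second, the comparison $u\le C\overline\Phi$ on the non-$\Gamma$ faces of $\mathcal C_{1/2}$ already presupposes a Carleson bound $\|u\|_{L^\infty(\mathcal C_{2/3})}\le C\,u(\overline E)$; in H\"older domains this is not soft (the naive chain gives only $u\lesssim e^{Ch_\Gamma^{1-1/\alpha}}$, which blows up at $\Gamma$), and the paper devotes a separate iterative argument (Lemma~\ref{CE2}) to it. Your geometric-series bookkeeping recovers the correct thresholds $\alpha>1/2$, $\beta>1/3$, but until the lower barrier and the Carleson input are actually produced there is no inequality to attach them to.
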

In the case of the heat operator we may lower further the space regularity of $g$ to any exponent $\alpha >0$ provided that we have a  
$1/2$ H\"older modulus of continuity in time (from one-side). 
\begin{thm}[$C^{\alpha, \frac 12}$ domains] \label{T4}Theorem $\ref{T1}$ holds for the heat equation if $g \in C_{x',t}^{\alpha, \frac 12}$ with $\alpha > 0$. 
\end{thm}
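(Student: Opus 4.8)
The plan is to follow the method of \cite{DS1,DS2}. After a parabolic rescaling — for $L=\Delta$ this is precisely the scaling that leaves the $C^{1/2}$ modulus in $t$ invariant — and the normalization $u(\overline E)=v(\underline E)=1$, the claim \eqref{PBH} becomes $u\le C v$ on $\mathcal C_{1/2}$. I would compare both functions with one fixed caloric reference function $\Phi\ge 0$ in $\mathcal C_1$ that vanishes on $\Gamma$ (morally the caloric measure of the non-lateral part of $\partial_p\mathcal C_1$, suitably normalized near the interior points $\overline E,\underline E$), proving
$$ u\ \le\ C\,\Phi\qquad\text{and}\qquad v\ \ge\ c\,\Phi\qquad\text{on }\mathcal C_{1/2},$$
and then dividing. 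Away from $\Gamma$ both bounds are elementary: $v\ge c>0$ there by the interior Harnack inequality, while $u\le C$ by the Carleson-type growth bound for nonnegative caloric functions vanishing on $\Gamma$ (only the epigraph structure — room of size $r$ above $\Gamma$ at each scale $r$ — is used). So the real content of both inequalities is their behaviour as one approaches $\Gamma$: the upper bound for $u$ is the Carleson estimate, and the lower bound for $v$ follows from the maximum principle once $v$ is compared with a ``change-of-pole'' version of $\Phi$. In this way everything reduces to the change-of-pole (doubling) estimate for the caloric measure of $\mathcal C_1$, i.e. to a boundary Harnack inequality for two explicit caloric functions vanishing on $\Gamma$.

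This last estimate I would prove by an improvement-of-oscillation step for the ratio of those two caloric functions at dyadic parabolic scales along $\Gamma$: at scale $2^{-k}$ the oscillation of the ratio shrinks by a fixed factor up to a small additive error, and these errors are summable. Here the one-sided $C^{1/2}$ bound on $g$ is used repeatedly and is sharp: it guarantees that at each scale the rescaled domain still contains a parabolic cylinder of definite size and that consecutive scales are joined by a Harnack chain of bounded length — exactly the threshold (Kemper-type examples) below which the estimate fails.

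The improvement step is closed by a barrier, and this is where $L=\Delta$ and the spatial regularity enter. At scale $2^{-k}$ one squeezes $\{x_n>g(x',t)\}$ between the two time–slabs obtained by replacing $g(x',t)$, over the ball $\{|x'|\le 2^{-k}\}$, by its supremum and its infimum in $x'$; the lateral boundaries of these slabs are graphs of functions of $t$ alone that are still $C^{1/2}$, and for the heat operator the caloric function of such a slab is an explicit barrier built from the Gaussian. The spatial mismatch between $\Gamma$ and these slabs is a vertical translate of size $O(2^{-k\alpha})$; it influences the solution only through the Gaussian tail of the heat kernel, so, since $\alpha>0$, the errors it contributes over the dyadic scales are summable and the iteration closes. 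The delicate point — and the reason Theorem \ref{T3} cannot be invoked directly — is that the $C^\alpha$ seminorm of $g$ is \emph{supercritical} for the parabolic scaling: under the zoom $(x,t)\mapsto(rx,r^2t)$ it is multiplied by $r^{\alpha-1}\to\infty$ when $\alpha<1$, so there is no scale-by-scale linearization of the lateral boundary. One must instead run the comparison directly against the explicit Gaussian profile, keeping the boundary roughness $2^{-k\alpha}$, which is much larger than the scale $2^{-k}$, under control through the exponential localization of the heat kernel; doing this uniformly in $k$ is the technical heart of the argument. The remaining reductions are routine adaptations of the elliptic arguments of \cite{DS1,DS2}.
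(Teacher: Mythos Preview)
Your outline departs substantially from the paper's proof, and at the step you yourself flag as ``the technical heart of the argument'' it does not supply a working mechanism.

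The paper does \emph{not} use Gaussian barriers or time-slabs. It keeps the iterative scheme of Lemma~\ref{ML2} (the H\"older version of Lemma~\ref{ML}), whose second step bounds $w^-$ by applying the weak Harnack inequality repeatedly in cubes of a certain size. For Theorem~\ref{T3} those cubes had size $r^\beta$; for Theorem~\ref{T4} the paper takes them of the much smaller size $\bar r = r^{\beta/\alpha}$, which improves the decay of $w^-$ from $e^{-cr^{1-\beta}}$ to $e^{-cr^{1-\beta/\alpha}}$ and removes the constraint $\alpha>\tfrac12$. The difficulty is that at that small scale one can no longer guarantee a uniform \emph{measure} bound on $\{w^-=0\}$ in each cube. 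The paper resolves this by introducing a parabolic capacity $cap_{Q_r}(E)$ and a dichotomy (Lemmas~\ref{l1} and~\ref{l2}): if the exterior $E$ has small capacity in $Q_{3\bar r}$, Harnack holds \emph{in measure} across $E$ and the lower bound $w\ge f(r)$ propagates from $\mathcal A_r$; if the capacity is large, $w^-$ decays directly. The one-sided $C^{1/2}_t$ bound enters only through the observation that the translation $T=(-\bar r\,e_n,\kappa\bar r^{2})$ maps $E$ into itself, so the capacity of $E$ in $Q_{3\bar r}(mT)$ is monotone in $m$ along each such array of cubes and the dichotomy is well posed. No explicit heat-kernel computation appears.

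In your plan, by contrast, the entire weight rests on the assertion that ``the spatial mismatch \dots\ is a vertical translate of size $O(2^{-k\alpha})$; it influences the solution only through the Gaussian tail of the heat kernel, so \dots\ the errors \dots\ are summable.'' This is not justified, and as stated it seems to fail. At parabolic scale $2^{-k}$ the caloric functions you are comparing are of order $d$ at height $d$ above $\Gamma$; a vertical shift of the boundary by $\delta=2^{-k\alpha}$ produces an \emph{absolute} error $O(\delta)$ but a \emph{relative} error $O(\delta/d)$, which at $d\sim 2^{-k}$ is $2^{k(1-\alpha)}\to\infty$. There is no Gaussian localization to invoke here: the mismatch sits directly above the evaluation point, not at lateral distance, so the heat-kernel tail does not damp it. You also assert that the Carleson bound $u\le C$ near $\Gamma$ uses ``only the epigraph structure''; in fact in this regime the Carleson estimate (Lemma~\ref{CE2}) already requires the full iterative argument, and the proof of Theorem~\ref{T4} must upgrade the exponent in \eqref{step22} by exactly the same capacity-based mechanism used for \eqref{step2}. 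Without a concrete replacement for the capacity/translation dichotomy, your iteration does not close.
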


We remark that the only property of the heat 
equation needed in the proof of Theorem \ref{T4} is the translation invariance with respect to the $x_n,t$ variables. Hence the theorem holds also for operators $L$ with coefficients depending only on the $x'$ variable.

Next we state a result in slit-domains, that is the case when the equations are satisfied in the complement of a thin set $S \subset \R^{n+1}$ included in a lower dimensional 
subspace. This case is relevant for example in the time-dependent Signorini problem. 

Precisely we assume that $S$ is a closed set and 
$$S \subset \{x_n=0\},$$ and in this case 
$$\mathcal C_r = B_r \times (-r^2,r^2),  \quad  \overline E=(1/2 e_n, 1/2), \, \, \underline E=(1/2 e_n,-1/2).$$ With these notation, we state our theorem.
\begin{thm}[Thin Parabolic Boundary Harnack]\label{T2} If $u, v$ are two positive solutions even in the $x_n$ variable, 
$$u_t=Lu \quad \quad v_t=Lv, \quad \mbox{in $\mathcal C_1 \setminus S$,} $$ 
and $u$ vanishes on $S$, then \eqref{PBH} holds. 
\end{thm}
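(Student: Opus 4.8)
The plan is to run the analytic scheme behind Theorems \ref{T1}--\ref{T4}, now regarding the thin set $S$ as a degenerate lateral boundary: the role played there by the homogeneity-$1$ (linear) barrier is taken over by the homogeneity-$\tfrac12$ caloric profile vanishing on a hyperplane, and the graph hypothesis on the domain is replaced by the evenness of $u$ and $v$ in $x_n$.

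First I would reduce to a local statement near $S$. Since $S$ is closed, $u$ and $v$ solve the equation on the open set $\mathcal C_1\setminus S$, which contains $\{x_n=0\}\setminus S$, so the interior parabolic Harnack inequality applies along any Harnack chain avoiding $S$. After the normalization $u(\overline E)=v(\underline E)=1$, this gives, for every $r_0>0$, constants $0<c(r_0)\le C(r_0)$ with $c(r_0)\le u,\,v\le C(r_0)$ on $\{\mathrm{dist}_p(\cdot,S)\ge r_0\}\cap\mathcal C_{3/4}$ — in particular $u\asymp v$ there — and shows $u(\overline E),u(\underline E),v(\overline E),v(\underline E)$ to be mutually comparable. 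It then remains to bound $u/v$ in a parabolic neighborhood of $S$ inside $\mathcal C_{1/2}$.

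Next I would build the boundary machinery at $S$. For $Z\in S$ and small $r$, let $A_{Z,r}:=(Z'+\tfrac r2 e_n,\,t_Z)$ be the corkscrew point at scale $r$. Dilating the function $\mathrm{Re}\,(x_n+i\,\delta)^{1/2}$, where $\delta$ denotes the distance to the slit, and perturbing it by a small multiple of $|x-Z|^2+|t-t_Z|$ to make it a strict sub/supersolution, I would obtain a Carleson-type bound $\sup_{\mathcal C_r(Z)}u\le C\,u(A_{Z,r})$ for the solution $u$ vanishing on $S$, together with the matching lower bound $u\ge c\,(\delta/r)^{1/2}\,u(A_{Z,r})$ on $\mathcal C_{r/2}(Z)$. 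I would then compare $u$ and $v$ on each $\mathcal C_r(Z)$ by a dichotomy: either $v\ge c_1 v(A_{Z,r})$ throughout $\mathcal C_{r/2}(Z)$, in which case $u\le C\,u(A_{Z,r})\le C\,v(A_{Z,r})\le (C/c_1)\,v$ there by the previous bound and by $u\asymp v$ away from $S$; or $v$ is small somewhere in $\mathcal C_{r/2}(Z)$, hence — being bounded below away from $S$ — small near $S$, in which case one rescales $\mathcal C_r(Z)$ to unit size, renormalizes $u$ and $v$ by their corkscrew values, and iterates. The iteration is the improvement-of-oscillation step of \cite{DS1,DS2} adapted to the $\tfrac12$-profile: from $u\le Mv$ on one scale one deduces, by applying the interior Harnack inequality and the $\tfrac12$-barrier to the nonnegative solutions $Mv-u$ and $u$, that $\sup(u/v)$ does not grow as the scale decreases. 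Chaining these local comparisons over a Vitali covering of $S\cap\mathcal C_{1/2}$, and combining with the estimate away from $S$, gives $u\le C v$ on $\mathcal C_{1/2}$, which is \eqref{PBH}.

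The hard part will be this dichotomy/iteration near $S$. One must verify that the alternative ``$v$ small near $S$'' genuinely improves under rescaling while the admissible boundary profiles stay homogeneous of order $\tfrac12$; this reduces to the Liouville-type rigidity that a global positive solution, even in $x_n$, vanishing on a thin set $S_\infty\subset\{x_n=0\}$ and of at most $\tfrac12$-growth, must be a constant multiple of the $\tfrac12$-profile — the $x_n$-even, order-$\tfrac12$ analogue of the rigidity underlying Theorems \ref{T3}--\ref{T4}, with evenness in place of the graph structure. A secondary point is that $v$ is not assumed to vanish on $S$, so the Carleson lower bound for $v$ is available only through the dichotomy rather than a priori; the construction of the $\tfrac12$-barrier accommodating an arbitrary closed $S\subset\{x_n=0\}$ follows the same lines as in the graph case.
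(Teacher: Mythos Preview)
Your route is substantially different from the paper's, and its two load-bearing steps do not survive for an \emph{arbitrary} closed $S\subset\{x_n=0\}$. The paper proceeds by a straight transcription of the proof of Theorem~\ref{T1}: one redefines
\[
\mathcal C_r^{-}:=Q_r\setminus S,\qquad \mathcal A_r:=Q_r\cap\{|x_n|\ge\delta r\},
\]
and applies Lemma~\ref{ML} to $w=C_1v-c_1u$. The Harnack-chain step connecting $\mathcal A_{r/2}$ to $\mathcal A_r$ runs entirely in $\{x_n\ne 0\}$, hence never touches $S$; evenness makes the two half-spaces symmetric. For the decay of $w^-$ one only needs that the extension of $w^-$ by $0$ on $S$ is a subsolution in $Q_r$ (this is where the interpretation ``$u$ vanishes on $S$'' as a limit of continuous subsolutions is used), together with the elementary measure bound $|\{|x_n|\ge\delta r\}\cap Q_{2\delta r}(x,t)|\ge\tfrac12|Q_{2\delta r}|$. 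No barrier, no blow-up, no Liouville classification is required; the argument is completely insensitive to the geometry of $S$ beyond the inclusion $S\subset\{x_n=0\}$. The Carleson estimate is obtained by the same mechanism.

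By contrast, your barrier $\mathrm{Re}\,(x_n+i\,\delta)^{1/2}$, with $\delta$ the distance to $S$, is not a sub/supersolution (or even smooth) for a general closed $S$; the homogeneity-$\tfrac12$ profile is tied to the case where $S$ is locally a half-space in $\{x_n=0\}$. More seriously, the Liouville rigidity you invoke is false in this generality: if $S_\infty=\{x_n=0\}$ the global even nonnegative solution vanishing on $S_\infty$ is a multiple of $|x_n|$ (homogeneity $1$), if $S_\infty=\varnothing$ it is a constant, and for intermediate $S_\infty$ there is no canonical profile. Without this rigidity your dichotomy iteration has no closing step, and the ``$v$ small near $S$'' branch cannot be shown to improve. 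The point of the paper's approach is precisely that Lemma~\ref{ML} replaces all of this by a single weak-Harnack decay for the extended subsolution $w^-$, which needs nothing about $S$ except that it lies in a hyperplane.
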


The assumption that $u$, $v$ are even in the $x_n$ variable can be removed provided that  $\mathcal C_1 \setminus S$ contains a ball of radius $\sigma$ centered on $\{x_n=0\}$, and the constant $C$ in estimate \eqref{PBH} depends on $\sigma$. 

We remark that in Theorems \ref{T3}-\ref{T2} whenever the boundary of the domain contains non-regular points for the Dirichlet problem, the statement that $u$ vanishes on it is interpreted in the sense that $u$ is the limit of a sequence of continuous subsolutions which vanish on it.

\subsection{Known literature.} For the last 50 years, the boundary Harnack principle has played an essential role in analysis and PDEs in a variety of contexts. The available literature on this topic is very rich and we collect here only the crucial results, making no attempt to discuss the countless important applications of this fundamental tool.

\subsubsection{Elliptic case} In the elliptic context, the classical Boundary Harnack Principle, that is the case when $g$ is Lipschitz continuous, states the following. Here the notation is the same as above, with $u,v, g$ independent on $t$.
\begin{thm}\label{main} Let $u,v>0$ satisfy $ L u=L v=0$ in $\mathcal C_1$ and vanish continuously on $\Gamma$. Assume $u,v$ are normalized so that $u\left( e_n/2\right)=v(e_n/2)=1,$ then
\begin{equation}\label{BHI} C^{-1} \leq \frac u v \leq C, \quad \text{in $\mathcal C_{1/2},$}\end{equation} with $C$ depending on $n, \lambda, \Lambda,$
and the norm of $g$. \end{thm}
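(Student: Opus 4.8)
The plan is to follow the analytic scheme of \cite{DS1}: a Carleson-type bound produced by barriers, a quantitative improvement-of-oscillation lemma for the ratio $u/v$, and an iteration on dyadic scales. First I would reduce. With $u,v$ normalized so that $u(e_n/2)=v(e_n/2)=1$, it suffices to prove $u\le C v$ in $\mathcal C_{1/2}$, since exchanging the roles of $u$ and $v$ yields the reverse inequality and hence \eqref{BHI}. The hypotheses are invariant under the dilations $x\mapsto r^{-1}(x-z)$ with $z\in\Gamma$, which send $\Gamma$ to the graph of a function with the same Lipschitz constant and $\mathcal C_r(z)$ to $\mathcal C_1$, so every estimate below can be run at all dyadic subscales with constants depending only on $n,\lambda,\Lambda$ and the Lipschitz norm of $g$. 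Throughout, $A_r(z):=z+\tfrac r2 e_n$ denotes the interior corkscrew point, which lies in $\mathcal C_r(z)$ at distance $\sim r$ from $\Gamma$; in particular $A_1(0)=e_n/2$.

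Next I would establish the Carleson estimate: if $w\ge 0$ solves $Lw=0$ in $\mathcal C_1$ and vanishes continuously on $\Gamma$, then $\sup_{\mathcal C_{1/2}}w\le C\,w(e_n/2)$, and more generally $\sup_{\mathcal C_{r/2}(z)}w\le C\,w(A_r(z))$ for every $z\in\Gamma$. The mechanism is the uniform exterior cone condition of a Lipschitz graph: one constructs an explicit $L$-supersolution, homogeneous of some degree and vanishing on the axis of an exterior cone, slides a multiple of it over $w/\sup_{\mathcal C_1}w$ to control $w$ near $\Gamma$, and combines this with the interior Harnack inequality along a Harnack chain of length $\lesssim\log(1/r)$ joining $A_r(z)$ to $e_n/2$, iterating over dyadic annuli to absorb the constants. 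The symmetric construction with an interior cone gives the matching lower bound $w(A_r(0))\ge c\,r^{N}w(e_n/2)$ for some $N=N(n,\lambda,\Lambda,\|g\|_{C^{0,1}})$.

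The heart of the argument is the improvement-of-oscillation lemma, and I expect it to be the main obstacle: there exist $\rho,c_0\in(0,1)$, depending only on the data, such that if $u,v>0$ solve $Lu=Lv=0$ in $\mathcal C_1$, vanish on $\Gamma$, and $av\le u\le bv$ in $\mathcal C_1$, then $a'v\le u\le b'v$ in $\mathcal C_\rho$ with $b'-a'\le(1-c_0)(b-a)$. Writing $w_1:=u-av$ and $w_2:=bv-u$, both nonnegative $L$-solutions vanishing on $\Gamma$ with $w_1+w_2=(b-a)v$, one of them — say $w_1$ — has $w_1(A_\rho(0))\ge\tfrac12(b-a)\,v(A_\rho(0))$, and the assertion reduces to $w_1\ge c_0(b-a)v$ throughout $\mathcal C_\rho$. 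This is exactly where the Lipschitz boundary Harnack principle is genuinely hard: the vanishing of $v$ at $\Gamma$ is not governed by a single power of the distance, so no comparison function built from powers of $d(x)$ can force $w_1\gtrsim v$. I would prove it by contradiction and compactness: assuming failure, pick sequences $u_k,v_k,\Gamma_k$ realizing it, normalize $v_k$ at the corkscrew point, and use the Carleson estimate, the interior lower bound, and interior $C^\alpha$ estimates to pass to a limit; the limiting $\Gamma_\infty$ is again a Lipschitz graph, $u_\infty,v_\infty>0$ are solutions of the limiting equation vanishing on it, and $u_\infty/v_\infty$ would achieve an interior extremum on $\overline{\mathcal C_\rho}$, contradicting the strong maximum principle for $v_\infty$ (an interior touching forces $u_\infty\equiv\theta_0 v_\infty$) together with the normalization. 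The delicate point is to guarantee that the extremum is attained at an interior point and that the blow-up limit is nondegenerate; one controls this using the Carleson bound together with, if needed, an auxiliary barrier at $\Gamma$ — this is the step carried out explicitly in \cite{DS1}.

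Finally the iteration. Starting from a two-sided bound $c\le u/v\le C$ on $\mathcal C_{3/4}$ — itself obtained from one application of the above machinery after the normalization — apply the lemma on the dyadic cylinders $\mathcal C_{\rho^k}(0)$ to get $\mathrm{osc}_{\mathcal C_{\rho^k}(0)}(u/v)\le C(1-c_0)^k$. Hence $u/v$ extends to $\Gamma\cap\mathcal C_{1/2}$ with a Hölder modulus of continuity and takes values in a fixed interval $[c,C]$; the interior Harnack inequality then propagates this two-sided bound to all of $\mathcal C_{1/2}$. Keeping track of the constants through the barriers, Harnack chains and iteration, $C$ depends only on $n$, $\lambda$, $\Lambda$ and the Lipschitz norm of $g$, which is \eqref{BHI}.
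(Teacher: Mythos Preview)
The paper does not prove Theorem~\ref{main}; it is recalled as background, with attributions to \cite{A,D,K,W,CFMS,FGMS} and to the authors' own \cite{DS1,DS2}. The method of \cite{DS1}---the elliptic template for this paper's Lemma~\ref{ML}---is \emph{not} the compactness scheme you outline. It is a direct iterative lemma: one sets $w=C_1v-c_1u$ and shows that if $w\ge M$ on the set $\mathcal A_r$ of points at height $\ge\delta r$ above $\Gamma$ while $w\ge -1$ on all of $\mathcal C_r$, then the same configuration holds at scale $r/2$ with $M,1$ replaced by $Ma,a$. The first conclusion comes from interior Harnack along a chain of \emph{fixed} length (independent of $r$); the second from the weak Harnack inequality (the elliptic analogue of Theorem~\ref{whi4}) applied to the subsolution $w^-$, using that the graph structure of $\Gamma$ forces $\{w^-=0\}$ to occupy at least half of every cube of size $\sim\delta r$. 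Iterating yields $w>0$ on the normal segment, hence $u\le Cv$, with no compactness and no limiting operator.

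Your improvement-of-oscillation step, by contrast, has a genuine gap. The reduction ``$w_1(A_\rho)\ge\tfrac12(b-a)v(A_\rho)\Rightarrow w_1\ge c_0(b-a)v$ in $\mathcal C_\rho$'' is literally the boundary Harnack inequality for the pair $(w_1,v)$, so it cannot be assumed. Your compactness rescue fails exactly where you flag it: if the points at which $u_k/v_k$ nearly attains $a$ or $b$ drift toward $\Gamma_k$, no interior extremum survives in the limit, and the Carleson estimate does not prevent this---it controls $u_k$ and $v_k$ separately, not their ratio near $\Gamma$. An ``auxiliary barrier at $\Gamma$'' strong enough to pin the ratio would already encode boundary Harnack. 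In addition, for $L$ in non-divergence form with merely bounded measurable coefficients there is no limiting equation to which a strong maximum principle for $u_\infty-\theta_0 v_\infty$ can be applied. This is not how \cite{DS1} proceeds; the measure-decay iteration of Lemma~\ref{ML} is designed precisely to avoid this circularity.
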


The case when $L= \Delta$ first appears in \cite{A,D,K,W}. Operators in divergence form were then considered in \cite{CFMS}, while the case of operator in non-divergence form was treated in \cite{FGMS}. The same result for operators in divergence form was extended also to so-called NTA domains in \cite{JK}. The case of H\"older domains and $ L$ in divergence form was  addressed with probabilistic techniques in \cite{BB1,BBB}, and an analytic proof was then provided in \cite{F}. For H\"older domains and operators $ L$ in non-divergence form, it is necessary that the domain is $C^{0,\alpha}$ with $\alpha>1/2$ or that it satisfies a uniform density property, and this was first established again using a probabilistic approach \cite{BB2}.

In \cite{DS1, DS2} we presented a unified analytic proof the Boundary Harnack Principle that does not make use of the Green's function and which holds for both operators in non-divergence and in divergence form. The idea is to find an ``almost positivity property" of a solution, which can be iterated from scale 1 to all smaller scales (some similar ideas were also used in \cite{KiS, S} to treat non-divergence equations with unbounded drift). This strategy successfully applies to other similar situations like that of H\"older domains, NTA domains, and to the case of slit domains, providing a unified approach to a large class of results.

\subsubsection{Parabolic case.} For parabolic equations the situation is more complicated, essentially due to the evolution nature of the latter which is reflected in a time-lag in the Harnack Principle. 
For operators in divergence form, the parabolic boundary Harnack principle in Theorem \ref{T1} is due to \cite{K2,FGS,Sal}. In the case of operators in non-divergence form in cylinders with $C^2$ cross sections, Theorem \ref{T1} was settled in \cite{G}, where the author also derived a Carleson estimate (see Lemma \ref{CE}) in Lipschitz domains. The statement of Theorem \ref{T1} in Lipschitz domain was later obtained in
 \cite{FSY}, which is (to the authors knowledge) the first instance in which a boundary Harnack type result in Lipschitz domains is obtained without the aid of Green's functions (and it is probably the inspiration for the later works in the elliptic context \cite{KiS, S}). In \cite{HLN}, Theorem \ref{T1} was also shown to hold for unbounded parabolically Reifenberg flat domains.  In the context of time independent H\"older domains, a result  in the spirit of Theorem \ref{T3} was obtained via probabilistic techniques in \cite{BB}. The result in Theorem \ref{T4} is completely novel. Concerning slit domains, in the case when $S$ is the subgraph of a parabolic Lipschitz graph, the thin-version Theorem \ref{T2} was established by \cite{PS}. Again, our strategy provides a unified approach for a variety of contexts.

\subsection{Organization of the paper.} The paper is organized as follows. In Section 2, after recalling some standard results, we provide the proof of Theorems \ref{T1} and \ref{T2}. The key ``almost positivity" property to be iterated from scale 1 to all smaller scales, is obtained in Lemma \ref{ML}. The following section deals with H\"older domains and the proof of Theorem \ref{T3}, which relies on the same strategy as Theorem \ref{T1}, though the proof of the Carleson estimate in the H\"older setting requires a more involved argument similar to the one in the proof of Lemma \ref{ML}. Section 4 contains the proof of Theorem \ref{T4}, which is based on refined versions of the weak Harnack inequality (see Lemmas \ref{l1}-\ref{l2}).

\section{Proof of Theorem \ref{T1} and \ref{T2}}

In this section, we provide the proof of the classical result Theorem \ref{T1} and the novel result Theorem \ref{T2}. We start by collecting standard known Harnack type inequalities. In the divergence setting these results are due to \cite{M}, while in the non-divergence setting they follow from \cite{Wa}.

\subsection{Weak Harnack inequality}

Denote by $$Q_r:=(-r,r)^n \times (-r^2,0], \quad \quad Q_r(x_0,t_0) := (x_0,t_0) + Q_r,$$ the parabolic cubes of size $r$.
The parabolic boundary of $Q_r$ is denoted by $\p_p Q_r$ and is given by:
$$\p_p Q_r:= (\p (-r,r)^n \times (-r^2,0)) \cup ((-r,r)^n \times \{-r^2\}).$$

Similarly, 
$$Q'_r:=(-r,r)^{n-1} \times (-r^2,0], \quad \quad Q'_r(x'_0,t_0) := (x'_0,t_0) + Q'_r.$$ 
Our main tools in establishing the boundary Harnack inequalities are the standard weak Harnack estimates. We recall the parabolic versions which as mentioned in the introduction
differ from the elliptic counterparts due to the time-lag.   

\begin{thm}[Supersolution]
If $$ u_t \ge Lu \quad \mbox{and} \quad u \ge 0 \quad \quad \mbox{in $Q_1$}, \quad u(0,0)=1,$$
then
$$ \int_{Q_{\frac 12}(0,-\frac 12)} u^p \, dxdt \le C,$$
for some $p>0$ small, $C$ large universal (i.e. dependent on $n,\lambda, \Lambda$).
\end{thm}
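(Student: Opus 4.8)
The plan is to prove the weak Harnack inequality for parabolic supersolutions via a standard measure-theoretic / growth-lemma argument, exactly paralleling the elliptic case but keeping careful track of the time-lag. First I would recall (or rather invoke, since these are attributed to \cite{M} in the divergence case and \cite{Wa} in the non-divergence case) the basic building block: a \emph{lemma of growth} or measure-to-pointwise estimate asserting that if $u \ge 0$ is a supersolution in $Q_1$ and $u \ge 1$ on a subset of $Q_1$ of measure at least $\mu$, then $u$ is bounded below by a constant $\delta = \delta(\mu,n,\lambda,\Lambda) > 0$ on a smaller parabolic cube shifted backward in time (the backward shift is the parabolic phenomenon: positivity propagates only into the past for supersolutions, i.e.\ forward in time for the equation run backward). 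The normalization $u(0,0) = 1$ together with the interior Harnack inequality for nonnegative supersolutions — or more precisely the weak Harnack inequality in its pointwise-to-integral form — then forces a lower bound on a full cube below the vertex.

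The cleanest route is to quote directly the parabolic weak Harnack inequality in the form: for a nonnegative supersolution in $Q_1$,
$$\left(\fint_{Q_{1/2}(0,-1/2)} u^p\,dx\,dt\right)^{1/p} \le C \inf_{Q_{1/2}(0,0^-)} u,$$
or a variant thereof with appropriately placed subcubes, which is exactly Moser's theorem (\cite{M}) in the divergence setting and Krylov--Safonov type estimates (\cite{Wa}) in the non-divergence setting. Since we only need the \emph{upper} bound on the $L^p$ average of $u$ over the past cube $Q_{1/2}(0,-1/2)$ in terms of a pointwise value, and the relevant infimum set for the weak Harnack inequality as stated in these references can be taken to be a neighborhood of the vertex $(0,0)$, the hypothesis $u(0,0) = 1$ (interpreted, if $u$ is merely a supersolution, via lower semicontinuity or by first replacing $u$ by $\min(u, M)$ and the usual limiting argument) gives $\inf u \le u(0,0) = 1$ on that set, and hence $\int_{Q_{1/2}(0,-1/2)} u^p \le C$. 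The exponent $p > 0$ small is precisely the one furnished by the John--Nirenberg / Moser iteration in those proofs.

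Since the statement is explicitly flagged in the text as a ``standard known Harnack type inequality'' with the references \cite{M} and \cite{Wa} supplied, I would not reprove it from scratch; the proof amounts to citing the correct theorem and performing the trivial normalization bookkeeping. The only genuine point requiring a word of care is the geometry: one must check that the backward-in-time cube $Q_{1/2}(0,-1/2) = (-1/2,1/2)^n \times (-3/4,-1/4]$ lies inside $Q_1 = (-1,1)^n \times (-1,0]$ (it does, with room to spare) and that the form of the weak Harnack inequality quoted has its ``sup-side'' cube positioned so that it contains the origin, so that $u(0,0)=1$ can be fed in. I expect no real obstacle here; the main subtlety, if any, is purely notational — aligning the precise placement of subcubes in the cited weak Harnack statement with the placement used here — and is handled by shrinking constants and a chaining argument over finitely many overlapping cubes.
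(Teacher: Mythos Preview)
Your proposal is correct and matches the paper's treatment exactly: the paper does not prove this statement but simply records it as a standard weak Harnack inequality, attributing it to \cite{M} in the divergence case and \cite{Wa} in the non-divergence case. Your observation that the only content is the trivial bookkeeping of cube placement and the normalization $u(0,0)=1$ is precisely the point.
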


\begin{thm}[Subsolution]\label{whi2}
If $$ u_t \le Lu \quad \mbox{and} \quad u \ge 0 \quad \quad \mbox{in $Q_1$},$$
then 
$$ u(0,0) \le C(p) \| u\|_{L^p(Q_1)},$$
for any $p>0$.
\end{thm}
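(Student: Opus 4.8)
The plan is to deduce the estimate for every $p>0$ from the local maximum principle with one convenient fixed exponent $q$, and to pass to a general $p$ afterwards by a purely algebraic interpolation–and–iteration argument. First I would record the scaled local boundedness estimate: there is a universal exponent $q>0$ and $\gamma=\gamma(n)>0$ such that, for every nonnegative subsolution and all $0<r_1<r_2\le 1$,
\begin{equation*}
\sup_{Q_{r_1}} u \;\le\; \frac{C}{(r_2-r_1)^{\gamma}}\,\|u\|_{L^q(Q_{r_2})},
\end{equation*}
with $C$ universal. In the divergence case $u_t\le\operatorname{div}(A\nabla u)$ one takes $q=2$ and runs the classical Moser iteration: testing the (Steklov–averaged) equation against $\eta^2u^{2\beta-1}$ and using ellipticity yields a Caccioppoli inequality controlling $\sup_t\int\eta^2u^{2\beta}\,dx$ together with $\iint|\nabla(\eta u^{\beta})|^2$; combined with the parabolic Sobolev inequality this improves the integrability exponent from $2\beta$ to $2\beta\frac{n+2}{n}$, and iterating $\beta\to\infty$ over shrinking cylinders gives the estimate with $\gamma=(n+2)/2$. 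In the non-divergence case $u_t\le\operatorname{tr}(AD^2u)$ the energy method is unavailable and one instead uses the parabolic Aleksandrov–Bakelman–Pucci (Krylov–Tso) estimate together with the Krylov–Safonov measure–covering argument, obtaining the same estimate with $q=n+1$. Both are classical — they are precisely the statements of \cite{M} (divergence) and \cite{Wa} (non-divergence).

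Granted the fixed–exponent estimate, the passage to general $p\in(0,q)$ is standard. For $1/2\le r_1<r_2\le 1$ one has, on $Q_{r_2}$, the pointwise bound $u^q=u^{q-p}u^p\le(\sup_{Q_{r_2}}u)^{q-p}u^p$, hence
\begin{equation*}
\sup_{Q_{r_1}} u \;\le\; \frac{C}{(r_2-r_1)^{\gamma}}\Big(\sup_{Q_{r_2}}u\Big)^{1-\frac pq}\Big(\int_{Q_1}u^p\,dxdt\Big)^{1/q}.
\end{equation*}
Applying Young's inequality with exponents $\tfrac{q}{q-p}$ and $\tfrac qp$ absorbs half of the supremum on the right:
\begin{equation*}
\sup_{Q_{r_1}} u \;\le\; \tfrac12\sup_{Q_{r_2}}u \;+\; \frac{C(p)}{(r_2-r_1)^{\gamma q/p}}\,\|u\|_{L^p(Q_1)}.
\end{equation*}
The elementary iteration lemma (if $h(r_1)\le\tfrac12 h(r_2)+A(r_2-r_1)^{-a}$ for all $1/2\le r_1<r_2\le 1$, then $h(1/2)\le C(a)A$), applied to $h(r)=\sup_{Q_r}u$ (which is finite for $r<1$ since $\overline{Q_r}\subset Q_1$), then gives $\sup_{Q_{1/2}}u\le C(p)\|u\|_{L^p(Q_1)}$. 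Since $(0,0)$ belongs to the closed top face of $Q_{1/2}\subset Q_1$, we conclude $u(0,0)\le\sup_{Q_{1/2}}u\le C(p)\|u\|_{L^p(Q_1)}$ for $0<p<q$; for $p\ge q$ the claim is immediate from the fixed–exponent estimate and H\"older's inequality.

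The only genuinely technical ingredient is the fixed–exponent estimate itself: in the divergence setting, treating the time–derivative term in the Caccioppoli inequality rigorously (Steklov averaging) and tracking the cylinder geometry through the Moser iteration; in the non-divergence setting, the parabolic Krylov–Safonov covering. The exponent–lowering step is entirely routine. Since these fixed–exponent bounds are exactly what \cite{M,Wa} provide, in the paper the theorem is stated with a reference rather than reproved from scratch.
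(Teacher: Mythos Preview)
Your proposal is correct, and you have correctly identified the situation: the paper does not prove this theorem at all but simply records it as a standard tool, citing \cite{M} for the divergence case and \cite{Wa} for the non-divergence case. Your sketch---a fixed-exponent local boundedness estimate (Moser iteration with $q=2$ in divergence form, parabolic ABP/Krylov--Safonov with $q=n+1$ in non-divergence form) followed by the Young-inequality absorption and the Giaquinta--Giusti type iteration lemma to lower the exponent to an arbitrary $p>0$---is exactly the standard route by which the cited references establish the result, so there is nothing to compare.
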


The classical (backward) Harnack inequality then reads as follows.

\begin{thm}[Harnack inequality]\label{Har}
If $$ u_t = Lu \quad \mbox{and} \quad u \ge 0 \quad \quad \mbox{in $Q_1$},$$
then for $c$ small universal (dependent on $n,\lambda,\Lambda$),
$$ \min_{Q_{1/2}}u \ge c \, \, \max_{Q_{1/2}(0,- \frac 1 2)}u.$$
\end{thm}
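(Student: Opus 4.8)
The plan is to deduce the Harnack inequality from the two weak Harnack estimates above, applied to $u$ itself, which — being a solution — is simultaneously a subsolution and a supersolution. If $\max_{\overline{Q_{1/2}(0,-1/2)}}u=0$ there is nothing to prove (then $u\equiv 0$ on that cube and the right-hand side vanishes, while the left-hand side is $\ge 0$); otherwise I rescale $u$ so that $M:=\max_{\overline{Q_{1/2}(0,-1/2)}}u=1$, and the goal becomes $u\ge c$ on $Q_{1/2}$.

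First I would turn this maximum into an $L^p$ bound on a fixed cylinder by means of the subsolution estimate. Let $(\bar x,\bar t)$ be a maximum point, so $\bar t\le-1/2$ and $|\bar x|\le 1/2$; then the parabolic cube $Q_{1/4}(\bar x,\bar t)$ is contained in the fixed cylinder $\mathcal Q:=(-4/5,4/5)^n\times(-5/6,-1/2]\subset Q_1$. Rescaling Theorem \ref{whi2} to $Q_{1/4}(\bar x,\bar t)$ gives $1=u(\bar x,\bar t)\le C\big(|Q_{1/4}|^{-1}\int_{Q_{1/4}(\bar x,\bar t)}u^p\big)^{1/p}\le C'\,\|u\|_{L^p(\mathcal Q)}$, so the problem reduces to proving $\|u\|_{L^p(\mathcal Q)}\le C\inf_{Q_{1/2}}u$.

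For this reverse bound I would use the supersolution estimate in its rescaled, translated form: whenever $Q_r(z,s)\subset Q_1$ one has $\big(|Q_{r/2}|^{-1}\int_{Q_{r/2}(z,\,s-3r^2/4)}u^p\big)^{1/p}\le C\,u(z,s)$, i.e. a pointwise lower bound for $u$ controls the $L^p$ mass of $u$ on a cube sitting below it in time. Since the time slab of $\mathcal Q$ lies a fixed distance in the past of that of $Q_{1/2}$ and the relevant ranges of the $x$ variables differ by a fixed amount, one application does not suffice; instead I would run a Harnack chain, applying the estimate a universally bounded number of times, each step moving the base cube backward in time by $\sim r^2$ and sideways by $\sim r$ while keeping all cubes inside $Q_1$, until the chain covers $\mathcal Q$. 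This yields $\|u\|_{L^p(\mathcal Q)}\le C\inf_{Q_{1/2}}u$, and combined with the previous step it gives the theorem with $c=1/(C'C)$. (Equivalently, one may simply quote the standard ``strong'' form of the weak Harnack inequality for supersolutions, $\big(|\mathcal Q|^{-1}\int_{\mathcal Q}u^p\big)^{1/p}\le C\inf_{Q_{1/2}}u$, which is itself derived from the pointwise version stated above by exactly this covering argument.)

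The delicate point is precisely this last step: the supersolution estimate transmits information only backward in time, so bridging the prescribed time gap between $Q_{1/2}(0,-1/2)$ and $Q_{1/2}$ forces a chain of several applications, and the parabolic time-lag appearing in the statement is exactly the footprint of that chain. Organizing the chain — fixing the radius and the sequence of base cubes so that there are only universally many of them and none exits $Q_1$ — is the classical Moser / Krylov--Safonov covering argument, which I would invoke as a known tool rather than reprove here.
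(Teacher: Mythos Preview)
The paper does not give a proof of this theorem at all: it is recorded in Section~2.1 as one of the ``standard known Harnack type inequalities,'' attributed to Moser \cite{M} in the divergence case and Wang \cite{Wa} in the non-divergence case, and is simply quoted as a tool. So there is no ``paper's proof'' to compare your proposal against.

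That said, your sketch is the correct and classical route: combine the local maximum principle for subsolutions (Theorem~\ref{whi2}) with the weak Harnack inequality for supersolutions to pass from $\max_{Q_{1/2}(0,-1/2)}u$ to an $L^p$ bound and then to $\inf_{Q_{1/2}}u$. One comment on the chaining step: the supersolution estimate as stated in the paper transmits a \emph{pointwise} value at $(z,s)$ to an $L^p$ bound on a cube strictly in its past, and an $L^p$ bound alone does not immediately give back a pointwise value to feed into the next link of the chain. The way this is actually handled in the literature is that the supersolution weak Harnack is proved directly in its ``strong'' form
\[
\Big(\fint_{Q^-}u^p\Big)^{1/p}\le C\inf_{Q^+}u,
\]
with $Q^-$ and $Q^+$ fixed sub-cubes of $Q_1$; your parenthetical remark that one may simply quote this form is the right way to close the argument. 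Once that is in hand, a single application with $Q^-\supset \mathcal Q$ and $Q^+=Q_{1/2}$ (choosing the geometry appropriately inside $Q_1$) finishes the proof without further chaining.
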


Another useful version for the subsolution property is the following measure to pointwise estimate.

\begin{thm}[Subsolution]\label{whi4}
If $$ u_t \le Lu \quad \mbox{and} \quad 1 \ge u \ge 0 \quad \quad \mbox{in $Q_2$},$$
and for some $\delta>0,$ $$|\{u=0\} \cap Q_1(0,-1)| \ge \delta.$$ Then $$ u \le 1-c(\delta) \quad \mbox{in} \quad Q_{1}.$$

\end{thm}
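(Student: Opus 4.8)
The plan is to reduce the statement to the weak Harnack inequality for supersolutions applied to $w:=1-u$. First I would observe that $L$ annihilates constants (both $\mathrm{tr}(A D^2\cdot)$ and $\mathrm{div}(A\nabla\cdot)$ kill the function $1$), so the hypothesis $u_t\le Lu$ in $Q_2$ yields
$$w_t=-u_t \ge -Lu = L(1-u) = Lw \qquad \text{in } Q_2,$$
that is, $w$ is a nonnegative supersolution with $0\le w\le 1$ in $Q_2$. On the set $E:=\{u=0\}\cap Q_1(0,-1)$ we have $w\equiv 1$, so for every $p>0$
$$\int_{Q_1(0,-1)} w^p\,dx\,dt \ge \int_{E} w^p\,dx\,dt = |E| \ge \delta.$$

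Next I would invoke the weak Harnack inequality for supersolutions in its mean-value-to-infimum form: there are $p>0$ and $c>0$, depending only on $n,\lambda,\Lambda$, such that every nonnegative supersolution $w$ of $w_t\ge Lw$ in $Q_2$ satisfies
$$\inf_{Q_1} w \ge c\Big(\int_{Q_1(0,-1)} w^p\,dx\,dt\Big)^{1/p},$$
the cube $Q_1(0,-1)$ sitting in the past of $Q_1$ so that the parabolic time-lag is respected. This is nothing but the averaged counterpart of the pointwise Supersolution estimate recorded above; it is classical, and can also be quoted directly from \cite{M} in the divergence case and from \cite{Wa} in the non-divergence case. Combining the two displays gives $\inf_{Q_1} w \ge c\,\delta^{1/p}=:c(\delta)>0$, hence $u=1-w\le 1-c(\delta)$ in $Q_1$, which is the assertion of Theorem \ref{whi4}.

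I do not expect any essential obstacle. The only point that deserves attention is the passage from the pointwise Supersolution estimate stated above (normalized at a single interior point) to the averaged form used in the second step. This is the usual Moser-type argument: one covers $Q_1(0,-1)$ by boundedly many small parabolic subcubes, applies the pointwise bound after rescaling on each, and connects each of them to points of $Q_1$ along a Harnack chain running forward in time --- the orientation of that chain being the one item of bookkeeping to watch. Everything else is automatic: $w$ lies between $0$ and $1$ by hypothesis, the evenness and interior assumptions used elsewhere in this section are irrelevant here, and the dependence of the threshold on $\delta$ comes out explicitly as the power $\delta^{1/p}$.
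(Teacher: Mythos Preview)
Your argument is correct. Note, however, that the paper does not give its own proof of this statement: Theorem~\ref{whi4} is listed among the standard weak Harnack estimates and simply attributed to \cite{M} in the divergence case and to \cite{Wa} in the non-divergence case. Your derivation via $w=1-u$ and the averaged supersolution estimate is exactly the classical route by which this measure-to-pointwise bound is obtained from those references, so there is nothing to compare.
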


With these tools at hand, we are ready to provide in the following subsection our proof of the classical result in Theorem \ref{T1}.
\subsection{Proof of Theorem \ref{T1}}
In what follows, constants depending on $n, \lambda, \Lambda$, and the norm of $g$, are called universal.

We denote by
$$\mathcal C_r^-:=\{ (x',x_n,t)| \, \, x'\in (-r,r)^{n-1}, \quad  t \in (-r^2,0], \quad g(x',t)<x_n < g(x',t)+r\},$$
the backward in time cylinder of size $r$ on top (in the $e_n$ direction) of the graph $\Gamma$ of $g$. Also we set, 
$$\mathcal A_{r}:=\left\{(x,t) \in \mathcal C_r^-| \quad g(x',t)+ \delta r \le x_n < g(x',t) + r \right \}, $$
that is the collection of points in the cylinder $\mathcal C^-_r$ at height greater or equal than $ \delta r$ on top of $\Gamma$, for some $\delta>0$ small, to be made precise later. 
\begin{figure}[h] 
\includegraphics[width=0.5 \textwidth]{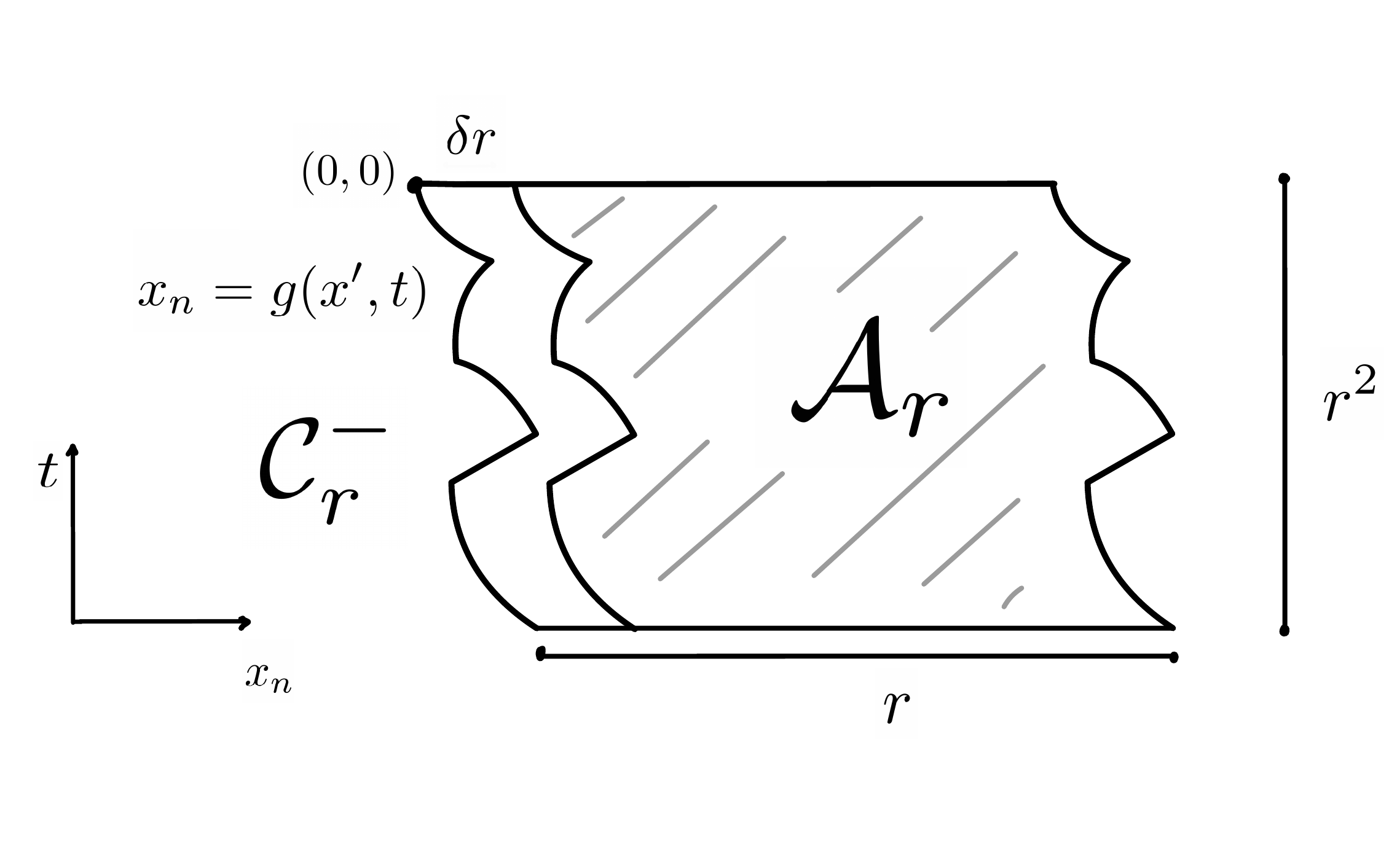}
\caption{ The sets $\mathcal C_r^-$ and $\mathcal A_{r}$.}
   \label{fig2}
\end{figure}

The key tool for establishing the boundary Harnack estimates is the following iterative lemma. Later we will apply this lemma for the difference $w= v - c u$ for some sufficiently small constant $c$, in order to obtain the desired claim in Theorem \ref{T1}. 
\begin{lem}\label{ML}
There exist universal constants $M$, $\delta>0$, such that if $w$ is a solution to $$ w_t=Lw \quad \mbox{ in} \quad \mathcal C^-_r,$$ (possibly changing sign) with  $w^-$ vanishing continuously on $\Gamma$, 
\begin{equation}\label{a1} w \geq M \quad \quad \text{in} \quad \mathcal A_{r}, \end{equation}
and
$$w \geq -1 \quad \text{in $\mathcal C^-_r$},$$
then,
\begin{equation}\label{301}
w \geq M \, a \quad \text{in $\mathcal A_{\frac r 2}$},
\end{equation} and
\begin{equation}\label{302}
w \geq -a \quad \text{in $\mathcal C^-_{\frac r 2}$}, 
\end{equation}
for some small $a>0$.
\end{lem}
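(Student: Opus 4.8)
The plan is to rescale by $(x,t)\mapsto(x/r,t/r^2)$ — this preserves the class of equations, the sets $\mathcal C^-_\bullet,\mathcal A_\bullet$, and the $C^{1,1/2}$ norm of $g$ — so one may assume $r=1$. I would first record the structural facts that drive everything. Since $L$ has no zeroth order term, $w+1\ge0$ is again a solution in $\mathcal C^-_1$, with $w+1\ge M+1$ on $\mathcal A_1$; and $w^-$ is a subsolution (a max of the solutions $-w$ and $0$) with $0\le w^-\le1$, vanishing continuously on $\Gamma$, so it extends by $0$ to $\{x_n\le g(x',t)\}$ to a subsolution $W$ on a full parabolic neighborhood of the origin — the only point to verify is the subsolution inequality on $\Gamma$ itself, which holds because a test function touching $W=0$ from above there automatically has a local minimum (over the past). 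Finally, $C^{1,1/2}$ is exactly the borderline regularity guaranteeing that at each point of $\Gamma$ and each scale a parabolic cube of size comparable to its height above $\Gamma$ still lies above $\Gamma$; together with the scale‑invariance of $\|g\|_{C^{1,1/2}}$, this lets interior Harnack chains for $w+1$ be run at the natural parabolic scale, uniformly, all the way down to $\Gamma$.

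\textbf{Choice of $M$ and confinement of $\{w^->0\}$.} The heart of the argument is a careful choice of $M$. Fix $\delta$ small (only geometric constraints matter) and take $M$ large. Because $w+1\ge M+1$ on $\mathcal A_1$, Harnack chains (Theorem \ref{Har}) run downward from $\mathcal A_1$ — a chain of length $\sim(1+\|g\|)\log_2(\delta/h)$ reaches height $h$ above $\Gamma$ — give $w\ge M^{1/2}>0$ on $\{x_n-g(x',t)\ge h_1\}\cap\mathcal C^-_{3/4}$, where $h_1=h_1(M)\to0$ as $M\to\infty$. Combined with $w^-=0$ on $\{x_n\le g(x',t)\}$, this confines $\{w^->0\}\cap\mathcal C^-_{3/4}$ to the thin slab $\{0<x_n-g(x',t)<h_1\}$.

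\textbf{Proof of \eqref{302} and \eqref{301}.} For \eqref{302}: given $P\in\mathcal C^-_{1/2}$, apply the subsolution estimate (Theorem \ref{whi2}) to $W$ on a parabolic cube $Q_\rho(P)$ of a fixed small size; since $0\le W\le1$ and $\{W\ne0\}\cap Q_\rho(P)$ lies in the thin slab, $\|W\|_{L^p(Q_\rho(P))}^p\le|\{W\ne0\}\cap Q_\rho(P)|\lesssim h_1\rho^{\,n+1}$, so $w^-(P)=W(P)\lesssim h_1(M)^{1/p}=:a(M)\to0$. Hence $w\ge-a$ in $\mathcal C^-_{1/2}$ with $a=a(M)$ as small as we wish. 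For \eqref{301}: where $x_n-g(x',t)\ge\delta$ we are inside $\mathcal A_1$, so $w\ge M\ge Ma$ since $a<1$; on the remaining strip $\{\delta/2\le x_n-g(x',t)<\delta\}\cap\mathcal C^-_{1/2}$ the height is comparable to $\delta$, so a Harnack chain from $\mathcal A_1$ of bounded length $k_0=k_0(n,\lambda,\Lambda,\|g\|)$ — independent of $\delta$ and of $M$ — gives $w+1\ge c^{k_0}(M+1)$, i.e. $w\ge\frac12 c^{k_0}M$ for $M$ large. Taking $M$ also large enough that $a(M)\le\frac12 c^{k_0}$ (possible because $a(M)\to0$ while $c^{k_0}$ is fixed) yields $w\ge\frac12 c^{k_0}M\ge Ma$ on the strip. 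All constants are universal and are fixed in the order: $\delta$ (geometric), then $k_0$, then $M$ large, then $a:=a(M)$, so there is no circularity.

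\textbf{Main obstacle.} The delicate point is the choice of $a$: \eqref{302} forces $a$ to dominate the worst near‑boundary negativity of $w$, while \eqref{301} on the strip can only be gotten from a fixed‑factor Harnack chain, so the two are reconcilable only if $a$ can be made genuinely small — which is possible exactly because taking $M$ large pushes $\{w^->0\}$ into an arbitrarily thin slab along $\Gamma$. Making the extension‑by‑zero of $w^-$ and its subsolution property rigorous in both the non‑divergence and divergence settings, and verifying that the scale‑invariance of the $C^{1,1/2}$ norm genuinely makes every Harnack‑chain estimate uniform up to $\Gamma$ (this is precisely what fails for rougher $g$, necessitating the later theorems), are the technical hurdles that require care.
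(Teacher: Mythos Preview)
Your argument is correct, but it follows a different route from the paper's. For \eqref{302} the paper extends $w^-$ by zero both below $\Gamma$ \emph{and} above the level $g+\delta r$ (where $w\ge M>0$), so that $w^-$ becomes a subsolution in the full slab $Q'_r\times\R$; then it iterates the measure-to-pointwise estimate (Theorem~\ref{whi4}) in cubes of size $2\delta r$, using the graph property to guarantee $|\{w^-=0\}|\ge\tfrac12$ of each such cube, and after $\sim 1/\delta$ iterations obtains $w^-\le(1-c)^{1/\delta}$ in $\mathcal C^-_{r/2}$. The paper fixes $a=1/M$ from the bounded chain giving \eqref{301} and chooses $\delta$ small \emph{last}. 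You instead run Harnack chains of length $\sim\log(\delta/h)$ to push the positivity of $w$ down to height $h_1(M)\to0$, confining $\{w^->0\}$ to an arbitrarily thin slab, and then apply a single $L^p\to L^\infty$ subsolution bound (Theorem~\ref{whi2}) at a fixed scale $\rho$; you fix $\delta$ first and take $M$ large \emph{last}. Your approach is essentially the Carleson-estimate mechanism of Lemma~\ref{CE} transplanted here, and is perhaps more direct in the $C^{1,1/2}$ setting. The paper's iterative decay, however, is the engine that generalizes: in Section~3 the cubes of size $2\delta r$ become cubes of size $2r^\beta$ and the $\sim1/\delta$ iterations become $\sim r^{1-\beta}$, producing precisely the exponential factor $e^{-cr^{1-\beta}}$ needed to beat the growth $f(r)$ in the H\"older case, and this is refined further in Section~4. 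Your log-chain-plus-one-shot-$L^p$ argument does not extend as cleanly to those settings.
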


The conclusion can be iterated indefinitely and we obtain that if the hypotheses are satisfied in $\mathcal C^-_{r}$ then 
\begin{equation}\label{pos}w > 0 \quad \text{on the line segment $\{(se_n,0), \quad 0<s<r\}.$}\end{equation}

\begin{proof} We start by observing that 
any point $(x_0,t_0) \in \mathcal A_{r/2}$ can be connected through a chain of backwards-in-time adjacent parabolic cubes of size $\bar r:=c_1 \delta r$ centered at $$ (x_j,t_j):=(x_0 + j \bar r e_n, t_0 - j \bar r ^2),$$ 
to a last cube $Q_{\bar r}(x_m,t_m) \subset \mathcal A_r$ (see Figure 2).
Here $c_1$ is small depending on the $C_{x',t}^{1,1/2}$ norm of $g$ so that $$Q_{2 \bar r} (x_j,t_j) \subset \mathcal C_r^-,$$ and the number $m$ of cubes depends only on $c_1$. By Harnack inequality (Theorem \ref{Har}) applied to $w+1 \ge 0$, using assumption \eqref{a1}, we get
$$(w +1)(x_0,t_0) \ge c^m (M+1)  \quad \Longrightarrow \quad w \ge 1 \quad \mbox{in} \quad \mathcal A_{r/2},$$
provided that we choose $M$ large depending on $c_1$ (and independent of $\delta$). Hence \eqref{301} holds with $a= 1/M$. 

To establish \eqref{302} with this choice of $M, a$, we first extend $w^-=0$ in 
$$Q'_r \times (\{x_n < g(x',t)\} \cup \{x_n > g(x',t) + \delta r\}),$$ so that  $w^-$ is a global subsolution in $Q'_r \times \R$ thanks to assumption \eqref{a1}.Then, for each cube $Q_{2\delta r}(x,t)$ satisfying $Q'_{2\delta r} (x', t) \subset Q'_r,$ we 
have
$$ |\{w^-=0\} \cap Q_{2\delta r}(x,t)| \ge \frac 12 |Q_{2\delta r}(x,t)|.$$
This is a consequence of the graph property of $\Gamma$. Indeed, for each fixed $(x',t)$, we consider the 1D line in the $e_n$ direction. Any segment 
of length $2 \delta r$ on this line has at least half of its length either in $\mathcal A_r$ or in the complement of $\mathcal C_r^-$.

By weak Harnack inequality, Theorem \ref{whi4}, as we remove the collection of cubes $Q_{2\delta r}(x,t)$ which are tangent to the parabolic 
boundary of $Q'_r \times \R$, the norm $\|
w^-\|_{L^\infty}$ decays by a factor $1-c$, $c>0$ universal. Iterating this for $\sim 1/\delta$ times we find that
$$w^- \le (1-c)^{1/\delta} \quad \mbox{in $\mathcal C^-_{r/2} \subset Q'_{r/2} \times \R$.}$$ We choose $
\delta$ small, so that $w^- \le a=M^{-1}$ and \eqref{302} holds. 

\end{proof}

\begin{figure}[h] 
\includegraphics[width=0.5 \textwidth]{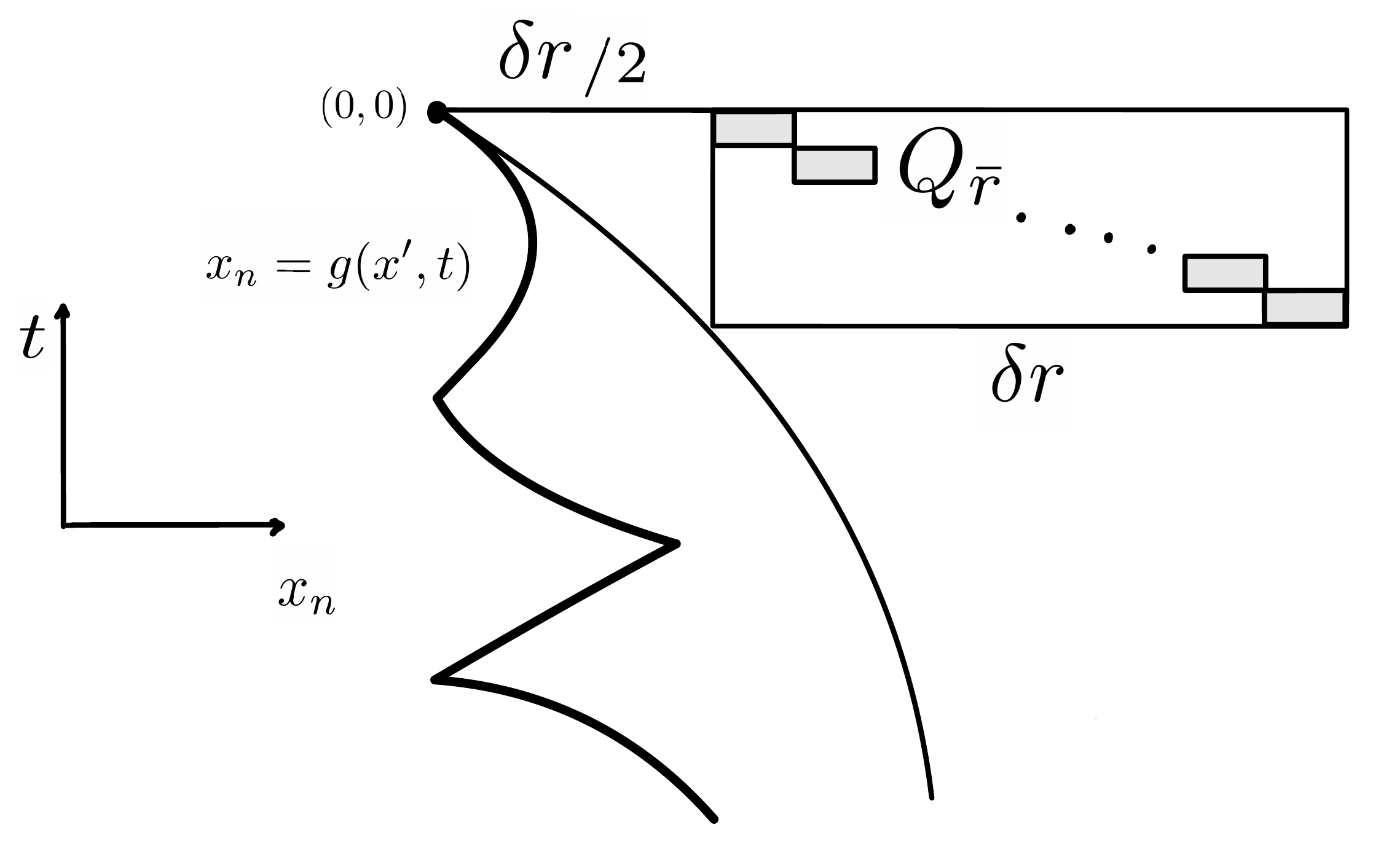}
\caption{Proof of Lemma \ref{ML}.} 
   \label{fig3}
\end{figure}

A second ingredient in the proof of Theorem \ref{T1} is the following Carleson estimate which provides a bound for $u$  in the cylinder $\mathcal C_{2/3}$.

\begin{lem}[Carleson estimate]\label{CE} Let $u, \bar E$ be as in Theorem $\ref{T1}$, then 
$$\|u\|_{L^\infty (\mathcal C_{2/3})} \le C \, u(\overline E),$$ with $C>0$
 universal.\end{lem}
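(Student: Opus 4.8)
The goal is the Carleson estimate: $\|u\|_{L^\infty(\mathcal C_{2/3})}\le C\,u(\overline E)$. The plan is to argue by a contradiction/compactness-free iteration: normalize $u(\overline E)=1$ and suppose $u$ takes a large value $N$ at some point $(x_0,t_0)\in\mathcal C_{2/3}$ close to $\Gamma$. I would then use the backward Harnack inequality (Theorem \ref{Har}) together with the geometry of the cylinder to produce a chain of parabolic cubes connecting $(x_0,t_0)$ to the reference point $\overline E$, but crucially the chain must \emph{climb away} from $\Gamma$: moving up in the $e_n$-direction (and back in time) by steps comparable to the current distance $d=x_n-g(x',t)$ from the boundary. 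If $u$ stayed of size $N$ all along such a chain one would immediately reach $\overline E$ and contradict $u(\overline E)=1$ once $N$ is large enough; so $u$ must \emph{increase} geometrically along the chain, i.e. there is a point at roughly twice the height where $u\ge \theta N$ for some $\theta>1$ universal. Iterating, $u$ becomes unbounded at interior points at bounded height, again contradicting the Harnack chain to $\overline E$. This is the standard "Carleson = boundary growth forces interior blow-up" mechanism, here in its parabolic (time-lagged) form.

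**Key steps, in order.**
First I would set up the geometry: for a point $(x_0,t_0)\in\mathcal C_{2/3}$ at height $d$ above $\Gamma$, use that $g\in C^{1,1/2}_{x',t}$ to know that the "interior cone/paraboloid" of opening comparable to $d$ and of height $\sim d$ (and time-width $\sim d^2$) sits inside $\mathcal C_{3/4}$, so interior Harnack applies on parabolic cubes of size $\sim d$. Second, I would state the one-step dichotomy: there is a universal $\theta>1$ such that if $u(x_0,t_0)=N$ and $(x_0,t_0)$ has height $d\le d_0$, then $\sup u$ over a suitable parabolic cube $Q_{cd}(x_0+cd\,e_n,\,t_0-(cd)^2)$ (located at height $\sim 2d$) is at least $\theta N$ — otherwise the backward Harnack chain from that cube straight up to $\overline E$ would give $u(\overline E)\ge c^m N$, which exceeds $1$ for $N$ large. (Here one must take $w^-$ issues out of the picture: $u\ge0$ and vanishes on $\Gamma$, so only positivity and interior Harnack are used.) Third, iterate the dichotomy: starting from a hypothetical point with $u=N$ at height $d$, we generate points with $u\ge\theta N$ at height $\sim 2d$, $u\ge\theta^2 N$ at height $\sim 4d$, etc.; after $\sim\log(1/d)$ steps we land at a point of height bounded below by a universal constant with $u\ge\theta^{\log(1/d)}N = N\,d^{-\log\theta}$, and once this point has height $\gtrsim 1/10$ a final Harnack chain to $\overline E$ forces $u(\overline E)\ge c\,N\,d^{-\log\theta}$. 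Taking $N$ large (or $d$ small) this contradicts $u(\overline E)=1$, proving the bound near $\Gamma$. Fourth, away from $\Gamma$ (points at height $\ge d_0$ universal) the estimate is just the interior backward Harnack inequality connecting to $\overline E$, so it is immediate. Combining the two regions gives $\|u\|_{L^\infty(\mathcal C_{2/3})}\le C$.

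**Main obstacle.** The delicate point is the parabolic time-lag in the Harnack chains: every upward step in the $e_n$-direction must be accompanied by a backward step in time of the correct parabolic size, and one has to verify that the resulting "staircase" of cubes never exits $\mathcal C_1$ (or $\mathcal C_{3/4}$) — both in space and in the past time direction — before reaching $\overline E$ at time $t=1/2$. Because $\overline E$ sits at $t=1/2$ while the bad point $(x_0,t_0)$ may be at a later time, one actually wants chains that go \emph{backward} in time toward $t=-1/2$ and then relies on the relation between $\underline E$-type and $\overline E$-type points; managing this bookkeeping so that the number $m$ of cubes in each chain stays universal (depending only on $n$, $\lambda$, $\Lambda$, and $\|g\|_{C^{1,1/2}}$) is the crux. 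The curvature/regularity of $\Gamma$ enters precisely in guaranteeing that the climbing cones of height $\sim d$ are admissible, which is why the $C^{1,1/2}$ hypothesis is used here and why the analogous estimate in the merely Hölder setting (Theorem \ref{T3}) requires the more involved argument the authors allude to.
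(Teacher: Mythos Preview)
Your dichotomy in the second step is where the argument breaks. You claim that if $u(x_0,t_0)=N$ at height $d$, then $\sup u$ on a cube at height $\sim 2d$ is at least $\theta N$ with $\theta>1$, and justify the ``otherwise'' branch by a Harnack chain to $\overline E$. But the interior Harnack inequality only gives $u \ge cN$ (with $c<1$) as you move \emph{away} from $\Gamma$; there is no mechanism producing $\theta>1$ in that direction. The chain you invoke from height $\sim d$ to $\overline E$ has length $m\sim |\log d|$, so $u(\overline E)\ge c^m N = N d^{C}$, which does not exceed $1$ unless you already know $N>d^{-C}$. In other words, your iteration (even granting the chain bookkeeping) only reproduces the polynomial bound $u\le d^{-C}$, not the uniform $L^\infty$ bound. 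The genuine $\theta>1$ growth in the classical doubling argument comes from the \emph{boundary} oscillation decay: extending $u$ by $0$ below $\Gamma$ and applying Theorem~\ref{whi4} in a cube of size $\sim d$ straddling $\Gamma$ yields a nearby point with $u\ge N/(1-c)$, but that point lies at height $\le Cd$ (possibly smaller than $d$), not at height $2d$. One then needs the polynomial bound $u\le d^{-C}$ as a separate input to force $d_k\to 0$ geometrically and make the sequence of doubling points Cauchy.

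The paper takes a different and shorter route. It first establishes the polynomial bound $u(x_0,t_0)\le d_0^{-C'}$ via a forward-in-time Harnack chain to $\overline E$ (exactly the $c^m$ computation above), observes that this gives $\|u\|_{L^p(\mathcal C_{12/17})}\le C$ for some small universal $p>0$, and then upgrades to $L^\infty$ in one stroke by extending $u$ by $0$ below $\Gamma$ and applying the subsolution weak Harnack inequality (Theorem~\ref{whi2}) in cubes of fixed universal size. No doubling, no convergent-sequence contradiction. If you want to repair your approach, you should either adopt this $L^p\to L^\infty$ finishing step, or redo the dichotomy so that the growth point is produced by Theorem~\ref{whi4} near $\Gamma$ and then feed in the polynomial bound to control the drift of the iterates.
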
 

\begin{proof}
The Carleson estimate can be established by similar arguments as in the Lemma \ref{ML} above. 
We will use this approach in the case of H\"older domains in the next section. However, for $C_{x',t}^{1,\frac 12}$ domains, 
the Carleson estimate is a direct consequence of the weak Harnack inequality.

Indeed, assume that $u(\overline E)=1$. Any point $(x_0,t_0) \in \mathcal C_{12/17}$ can be connected to $\overline E$ by a chain of forward-in-time adjacent cubes $Q_{r_j}(x_j,t_j)$ included in $\mathcal C_1$, with $r_j$ proportional to the parabolic distance $d_j$ from $(x_j,t_j)$ to $\Gamma$. The number of cubes in this chain is proportional to $|\log d_0|$. By Harnack inequality,
$$u(x_0,t_0) \le \, e^{C |\log d_0|} u(\overline E) \,  \le d_0^{-C'}.$$
This means that $\|u\|_{L^p} \le C$ in $\mathcal C_{12/17}$ for some small $p>0$ universal. The extension of $u$ by $0$ in $\Omega_r:=((-r,r)^{n-1} \times (-r^2,r^2)) \times \{x_n \leq g(x',t)\}$ is a subsolution, and now we can apply weak Harnack inequality Theorem \ref{whi2} in cubes $Q_{c_0}(x,t) \subset \mathcal C_{12/17} \cup \Omega_r$ for $(x,t) \in \mathcal C_{2/3}$ and $c_0$ small universal, to obtain the desired conclusion.

\end{proof}

We are now ready to combine the previous two lemmas and obtain the desired Theorem \ref{T1}.

\begin{proof}[Proof of Theorem $\ref{T1}$]
We assume that $u(\bar E)=v (\underline E)=1$ and define $w=C_1 v - c_1 u$. By Harnack inequality applied to $v$ and the Carleson estimate for $u$, 
we can choose the constants $C_1$ large, $c_1$ small (depending on $\delta$, $M$) such that $w$ satisfies$$w \ge -1 \quad \text{in $\mathcal C_{2/3},$} \quad \mbox{and} \quad w(x,t) \ge M \quad \mbox{if} \quad x_n \geq g(x',t) + \delta/4.$$
 Then we can apply Lemma \ref{ML} in cylinders $\mathcal C^-_{1/6}$ around any point on $\Gamma \cap \mathcal C_{1/2}$,
and conclude from \eqref{pos} that $ w >0$ in $\mathcal C_{1/2}$.
\end{proof}

\subsection{Proof of Theorem \ref{T2}}
The proof is identical to the one of Theorem \ref{T1} after the appropriate modifications in the definitions of $\mathcal C_r^-$ and $\mathcal A_r$. Precisely, 
$$\mathcal C_r^-:=Q_r \setminus S, \quad \quad \mathcal A_r:= Q_r \cap \{|x_n| \ge \delta r\}.$$
Lemma \ref{ML} applies for the difference $w=v-cu$. 
The hypotheses that $u$ and $w^-=(cu-v)^+$ vanish on $S$ are understood in the sense that 
each of them is obtained in $\mathcal C_1^-$ as a pointwise limit of an increasing sequence of continuos subsolutions in $Q_1$ which vanish on $S$.  
Notice that if 
$u_n$ is such a sequence for $u$, then $(c \, u_n - v)^+$ is a corresponding sequence for $w^-$, (since $v \ge 0$ in $\mathcal C_1^-$). Thus, the 
extensions of $u$ and $w^-$ by $0$ on $S$ are subsolutions in $Q_1$, and Lemmas \ref{ML} and \ref{CE} hold as above.

\qed

\section{H\"older domains and the proof of Theorem \ref{T3}} 

In this section we prove Theorem \ref{T3} by extending the arguments of the previous section to H\"older domains. 
We assume that for some $\alpha >\frac 12$, 
\begin{equation}\label{4000}
[g]_{C_{x',t}^{\alpha,\frac{\alpha}{1+\alpha}}} \le K,
\end{equation} 
for some constant $K$. Below, constants depending possibly on $n, \lambda, \Lambda, \alpha$ and $K$ are called universal.

We define 
$$\mathcal C_r^-:=\{ (x',x_n,t)| \, \, x\in (-r,r)^{n}, \quad  t \in (-r,0], \quad g(x',t)<x_n < g(x',t)+r\},$$
and notice that here we took the time interval of $\mathcal C_r^-$ of size $r$ instead of the natural parabolic scaling $r^2$ that we used in the previous 
section. This change is due to the fact that the norm of $g$ is no longer left invariant by the parabolic scaling. We also define 
$$\mathcal A_{r}:=\left\{x \in \mathcal C_r^-| \quad g(x',t)+r^\beta \le x_n < g(x',t) + r \right \}, $$
the points in the cylinder $\mathcal C^-_r$ at height greater or equal than $r^\beta$ on top of $\Gamma$, for some $\beta>1$ to be made precise later.

\begin{lem}\label{ML2}
Suppose \eqref{4000} holds for $\mathcal C_r^-$ and let $w$ be a solution to $$ w_t=Lw \quad \mbox{ in} \quad \mathcal C^-_r,$$ for which $w^-$ vanishes on $\Gamma$. There exist universal constants $C_0, \beta>0$ such that if 
$$w \geq f(r) \quad \quad \text{on} \quad \mathcal A_{r}, $$
and
$$w \geq -1 \quad \text{on $\mathcal C^-_r$},$$
where $$f(r):= e^{C_0r^\gamma}, \quad \gamma:=\beta(1-\frac{1}{\alpha}) <0,$$
then,
\begin{equation}\label{401}
w \geq f( \frac r 2) \, \, a \quad \text{on $\mathcal A_{\frac r 2}$},
\end{equation} and
\begin{equation}\label{402}
w \geq -a \quad \text{on $\mathcal C^-_{\frac r 2}$}, 
\end{equation}
for some small $a=a(r)>0$, as long as $r\leq r_0$ universal.
\end{lem}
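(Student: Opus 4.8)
The plan is to follow the same architecture as the proof of Lemma \ref{ML}, but to track carefully how the H\"older (rather than $C^{1,1/2}$) regularity of $g$ degrades the constants, and to choose the height threshold $r^\beta$ and the function $f(r)=e^{C_0 r^\gamma}$ so that the loss is exactly reabsorbed at each dyadic step. First I would establish \eqref{401}: any point $(x_0,t_0)\in\mathcal A_{r/2}$ should be connected by a backward-in-time Harnack chain of parabolic cubes to a cube contained in $\mathcal A_r$, exactly as before, but now the relevant length scale near the bottom of $\mathcal A_{r/2}$ is comparable to $r^\beta$ (the height of a point just inside $\mathcal A_{r/2}$ above $\Gamma$), not to $\delta r$. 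Because $g\in C^{\alpha,\alpha/(1+\alpha)}_{x',t}$, a cube of size $\rho$ centered at height $\sim\rho$ above the graph stays inside $\mathcal C_r^-\setminus\Gamma$ only if $\rho^\alpha\lesssim \rho$ in space and $\rho^{\alpha/(1+\alpha)}\lesssim\rho$ in time up to the graph oscillation; this forces the number of cubes in the chain to be of order $r/r^\beta = r^{1-\beta}$ (rather than a universal constant). Applying Theorem \ref{Har} across such a chain to $w+1\ge0$ gives a multiplicative loss $c^{\,Nr^{1-\beta}}$ for a universal $c<1$ and universal constant $N$, i.e. $(w+1)(x_0,t_0)\ge c^{\,Nr^{1-\beta}}(f(r)+1)$. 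The point is then to check that with $\gamma=\beta(1-1/\alpha)<0$ and $C_0$ large universal, $f(r)=e^{C_0 r^\gamma}$ satisfies $c^{\,Nr^{1-\beta}} f(r)\ge a\, f(r/2)$ for small $r$ and some small $a$; taking logs this is the inequality $C_0\big((r/2)^\gamma-r^\gamma\big)\le -N r^{1-\beta}\log(1/c)+\log(1/a)$, which holds once $1-\beta = \gamma = \beta(1-1/\alpha)$, i.e. $\beta=\alpha/(2\alpha-1)$ (note $\beta>1$ precisely when $\alpha<1$, and $\beta>0$ since $\alpha>1/2$), and $C_0$ is chosen to beat the constant $N\log(1/c)$.

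Next I would establish \eqref{402} by the same weak-Harnack-iteration mechanism used for \eqref{302}. Extend $w^-$ by $0$ in the region below $\Gamma$ and above the slab $\{x_n>g(x',t)+r^\beta\}$ so that, using the hypothesis $w\ge f(r)\ge0$ on $\mathcal A_r$, the extension is a global subsolution in $Q'_r\times\R$ (after the appropriate $C^{1,1/2}$-vs-H\"older adjustment of the cube shapes — here I would use parabolic cubes of size comparable to $r^\beta$, consistent with the non-parabolic time-scaling of $\mathcal C_r^-$ noted in the text). The graph property of $\Gamma$ again gives that on every vertical segment of length $\sim r^\beta$ at least half the length lies in $\mathcal A_r\cup(\mathcal C_r^-)^c$, so $|\{w^-=0\}\cap Q|\ge \tfrac12|Q|$ for each such cube $Q$ abutting the lateral boundary; Theorem \ref{whi4} then shrinks $\|w^-\|_{L^\infty}$ by a universal factor $1-c$ as we peel off one layer of cubes. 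Iterating $\sim r/r^\beta = r^{1-\beta}$ times to reach $\mathcal C^-_{r/2}$ yields $w^-\le (1-c)^{c' r^{1-\beta}}$ there. Since $r^{1-\beta}=r^\gamma\to+\infty$ as $r\to0$, this quantity is $\le a$ for $r\le r_0$; I would simply record $a(r):=(1-c)^{c'r^{1-\beta}}$, which is consistent with (indeed smaller than) the $a$ produced in the first half, so both \eqref{401} and \eqref{402} hold with a common small $a=a(r)$.

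The main obstacle I expect is the bookkeeping of exponents in the first step: one must verify that the \emph{same} $\beta$ simultaneously (a) makes the Harnack chain well-defined (each cube of relative size $r^{\beta-1}$ fits inside $\mathcal C_r^-$ away from $\Gamma$, using \eqref{4000} — this needs $\alpha\beta \ge \beta$ trivially and, more delicately, that the graph oscillates by less than the cube height across the base of a cube, i.e. $(r^\beta)^\alpha \cdot r^{-?}$ type comparisons), (b) makes the multiplicative Harnack loss exactly of the form $e^{-c r^{\gamma}}$ with the \emph{same} $\gamma$ appearing in $f$, and (c) gives $\beta>1$ so that $r^\beta$ is genuinely smaller than $r$ (this is where $\alpha<1$ is used; for $\alpha\ge1$ one is already in the $C^{1,1/2}$-type regime). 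Getting the chain geometry right — in particular that consecutive cubes centered at $(x_0+j\bar r e_n,\,t_0-j\bar r^2)$ with $\bar r\sim r^\beta$ overlap and stay in $\mathcal C_r^-$ for all $j$ up to $\sim r^{1-\beta}$, including near the top where they must also avoid exiting $\mathcal C_r^-$ through $\{x_n=g+r\}$ — is the delicate geometric point; everything else is a repetition of Lemma \ref{ML}. Finally, as in the text, the lemma is stated so that its conclusion can be iterated down dyadically (the loss at scale $r$ feeds the hypothesis at scale $r/2$ because $f(r/2)a \le f(r/2)$ and $-a\ge -1$), and the infinite iteration $\sum_j$ is controlled because $\prod_j a(2^{-j}r_0)$ — no, more precisely because at step $j$ the lower bound on $w$ along the central segment is $f(2^{-j}r_0)\prod a$ which stays positive — giving $w>0$ on $\{(se_n,0):0<s<r\}$; I would note this consequence immediately after the proof exactly as done after Lemma \ref{ML}.
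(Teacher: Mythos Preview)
Your Step~2 (the decay of $w^-$ via Theorem~\ref{whi4} in cubes of size $\sim r^\beta$, giving $w^-\le (1-c)^{c' r^{1-\beta}}$ on $\mathcal C_{r/2}^-$) matches the paper exactly. The gap is in Step~1, where your Harnack chain has the wrong scale and therefore the wrong length. A point $(x_0,t_0)$ at height $\sim (r/2)^\beta$ above $\Gamma$ cannot support a parabolic cube of size $\sim r^\beta$ inside $\mathcal C_r^-$: under \eqref{4000} the graph oscillates by $K\rho^\alpha$ over a lateral distance $\rho$, so the cube stays above $\Gamma$ only if $\rho^\alpha \lesssim r^\beta$, i.e.\ $\rho \lesssim r^{\beta/\alpha}$. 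The paper takes $\bar r = c_0\, r^{\beta/\alpha}$ as the cube size. Moreover the chain only has to climb a vertical distance $\sim r^\beta$ (from height $(r/2)^\beta$ to height $r^\beta$, which suffices to land in $\mathcal A_r$), not $r$. Hence the number of cubes is
\[
m \;\sim\; \frac{r^\beta}{r^{\beta/\alpha}} \;=\; r^{\beta(1-1/\alpha)} \;=\; r^\gamma,
\]
not $r^{1-\beta}$. This is precisely why $f(r)=e^{C_0 r^\gamma}$: the Harnack loss $e^{-Cm}=e^{-Cr^\gamma}$ is absorbed by $f(r)$ once $C_0>C$, yielding simply $w+1\ge 2$, i.e.\ $w\ge 1$ on $\mathcal A_{r/2}$.

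Your attempt to force $1-\beta=\gamma$ (equivalently $\beta=\alpha/(2\alpha-1)$) is exactly the wrong balance. With the correct chain count $r^\gamma$ in Step~1 and the decay exponent $r^{1-\beta}$ in Step~2, the closing inequality is $1\ge f(r/2)\,a=e^{C_0(r/2)^\gamma}\,e^{-cr^{1-\beta}}$, which requires the \emph{strict} inequality $1-\beta<\gamma$ so that $r^{1-\beta}$ dominates $r^\gamma$ as $r\to 0$. Here $\beta$ is not determined by $\alpha$; it is a free parameter to be chosen sufficiently large (and $\alpha>1/2$ is what makes $1-\beta<\beta(1-1/\alpha)$ achievable). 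Your log computation, with both exponents equal, would force a relation between the universal constant $c$ from Theorem~\ref{whi4} and the (necessarily large) $C_0$, which cannot be arranged.
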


\begin{proof} We adapt the argument of Lemma \ref{ML} in this case and sketch the details. 

We connect a point $(x_0,t_0) \in \mathcal A_{r/2}$ (which is not in $\mathcal A_r$) to a point $(x_m,t_m)$ with $x_m= x_0 + r^\beta e_n \in \mathcal A_r$ by a chain of adjacent backward-in-time cubes of size $\bar r := c_0 \,  r^{\beta/\alpha}$. The number $m$ of cubes depends on $r$, i.e. 
$$m \sim r^{\beta}/ \bar r = c_0^{-1} r^{\beta (1-\frac 1 \alpha)}= c_0^{-1} r^\gamma.$$
All the cubes are included in the domain ($t_m:=t_0-m \bar r^2$)
$$\left \{ (x- x_0) \cdot e_n \ge 0, \quad t \in [t_m, t_0], \quad (x-x_0)' \in [-\bar r, \bar r]^{n-1} \right\},$$
which by \eqref{4000} is included in $\mathcal C_r^-$ since $m \bar r^2 \sim r^\beta \bar r = c_0 r^{\beta \frac{\alpha+1}{\alpha}}$, and $c_0$ is chosen 
small. Moreover, $Q_{\bar r}(x_m,t_m) \subset \mathcal A_r$, and Harnack inequality for $w+1$ implies that 
\begin{equation}\label{step1}
w+1 \ge f(r) e^{-C m} \ge 2 \quad \quad \mbox{in $\mathcal A_{r/2}$,} 
\end{equation} where the last inequality is guaranteed if we choose $C_0$ sufficiently large.

 For the second step which bounds $w^-$ we use cylinders of size $2r^\beta$ (instead of $2 \delta r$ as before) and get by the same argument as in the Lipschitz case
 \begin{equation}\label{step2}
 w^- \le e^{-c r^{1-\beta}}=:a.
 \end{equation}
 The conclusion follows since in $\mathcal A_{r/2}$, $w \ge 1 \ge f(r/2) a$, and in the last inequality we used $1-\beta < \gamma$, provided that $\beta$ 
 is chosen sufficiently large.

\end{proof}

\begin{lem}[Carleson estimate] \label{CE2} Let $u, \bar E$ be as in Theorem $\ref{T3}$. Then, 
$$\|u\|_{L^\infty (\mathcal C_{1/2})} \le C \, u(\overline E),$$
with $C$ universal.
\end{lem}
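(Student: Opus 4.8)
The plan is to mirror the proof of Lemma \ref{CE} (the Lipschitz Carleson estimate), but replacing the direct Harnack-chain argument in its first step by a quantitative iteration in the spirit of Lemma \ref{ML2}, since in the H\"older setting the parabolic distance to $\Gamma$ no longer behaves well under scaling and a naive chain of cubes would give a bound that degenerates too fast near $\Gamma$. Concretely, I would normalize $u(\overline E)=1$ and first reduce, via the interior Harnack inequality (Theorem \ref{Har}) applied along a forward-in-time chain of cubes from $\overline E$, to the statement that $u$ is bounded by a fixed universal constant on the slice $\{x_n \ge g(x',t)+c\}\cap \mathcal C_{3/4}$ for some small universal $c>0$; this part is identical to the Lipschitz case because it only uses interior estimates away from $\Gamma$.

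The main work is then to propagate this bound down toward $\Gamma$ with the correct rate. Here I would run an iteration on dyadic scales analogous to Lemma \ref{ML2}, but now applied to $w:=A-u$ for a large universal constant $A$ rather than to a difference of two solutions. The claim to iterate is: if $u \le f(r)$ on $\mathcal A_r$ (with the same $f(r)=e^{C_0 r^\gamma}$, $\gamma = \beta(1-1/\alpha)<0$, and $\mathcal A_r$, $\mathcal C_r^-$ as defined above) then $u \le f(r/2)$ on $\mathcal A_{r/2}$. The upper bound on $\mathcal A_{r/2}$ is obtained exactly as in \eqref{step1}: connect a point of $\mathcal A_{r/2}$ to $\mathcal A_r$ through a chain of $m \sim c_0^{-1} r^\gamma$ backward-in-time cubes of size $\bar r = c_0 r^{\beta/\alpha}$ that, by \eqref{4000}, stay inside $\mathcal C_r^-$, and apply Harnack to the nonnegative solution $u$; the factor $e^{Cm} \le e^{C c_0^{-1} r^\gamma}$ is absorbed by choosing $C_0$ large, giving $u \le f(r/2)$ on $\mathcal A_{r/2}$. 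One must also handle the thin sliver $\mathcal C^-_{r/2}\setminus \mathcal A_{r/2}$ of points at height below $(r/2)^\beta$: since $u$ vanishes continuously on $\Gamma$ and, being extended by $0$ below $\Gamma$, is a subsolution, the weak Harnack/subsolution estimate (Theorem \ref{whi4}) shows that in cubes of size $\sim r^\beta$ tangent to $\Gamma$ the positivity set of $\{u=0\}$ forces $u$ to stay controlled by the same $f(r/2)$, exactly as in step \eqref{step2}. Iterating from $r=r_0$ down, $f(r)\to 1$ monotonically (as $\gamma<0$ and $r\to 0$), so in fact $u$ stays bounded by a fixed universal constant on all of $\bigcup_{r\le r_0}\mathcal A_r$, hence on $\mathcal C_{1/2}$ away from an arbitrarily thin neighborhood of $\Gamma$.

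Finally, to upgrade the resulting $L^\infty$ bound away from $\Gamma$ to a genuine $L^\infty$ bound up to $\Gamma$ on $\mathcal C_{1/2}$, I would argue as in Lemma \ref{CE}: having shown $u \le C$ (universal) on $\mathcal C_{2/3}$ minus a thin neighborhood of $\Gamma$, and $u=0$ below $\Gamma$, the extension of $u$ by $0$ is a bounded subsolution in a full neighborhood, and one concludes $\|u\|_{L^\infty(\mathcal C_{1/2})}\le C u(\overline E)$ directly, or alternatively by the same $L^p \Rightarrow L^\infty$ weak Harnack step (Theorem \ref{whi2}) used in Lemma \ref{CE}. The main obstacle, and the place requiring care, is the geometry of the chains in the first iterative step: one must verify that the chain of $m\sim r^\gamma$ cubes of size $r^{\beta/\alpha}$ genuinely remains inside $\mathcal C_r^-$ — i.e. that the total backward time displacement $m\bar r^2 \sim r^{\beta(\alpha+1)/\alpha}$ and the spatial spread are compatible with the H\"older modulus \eqref{4000} of $g$ — and that $\beta$ and $c_0$ can be fixed (with $\alpha>1/2$ ensuring $\gamma<0$ and the inequality $1-\beta<\gamma$) so that all constants close up universally and independently of the scale $r$; this is precisely the bookkeeping already carried out in Lemma \ref{ML2}, so I would reuse it verbatim.
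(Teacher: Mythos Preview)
Your proposal has a genuine gap that makes the iteration collapse. You claim that ``iterating from $r=r_0$ down, $f(r)\to 1$ monotonically (as $\gamma<0$ and $r\to 0$)''. This is backwards: since $\gamma=\beta(1-1/\alpha)<0$, we have $r^\gamma\to +\infty$ as $r\to 0^+$, hence $f(r)=e^{C_0 r^\gamma}\to\infty$. So the bound $u\le f(r)$ on $\mathcal A_r$ blows up as you approach $\Gamma$; it never yields a uniform bound, and your final paragraph (``having shown $u\le C$ universal away from $\Gamma$'') is unsupported. In fact the bound $u\le f(r)$ on $\mathcal A_r$ for all $r$ is exactly the paper's preliminary estimate \eqref{int}, $u\le e^{C_1 h_\Gamma^{1-1/\alpha}}$, and the whole point of the Carleson lemma is that this is \emph{not} good enough by itself.

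There is also a directional problem in your chain argument: applying the parabolic Harnack inequality along a \emph{backward-in-time} chain to the nonnegative solution $u$ gives $u(x_0,t_0)\ge c^m u(x_m,t_m)$, a lower bound, not the upper bound $u(x_0,t_0)\le e^{Cm} f(r)$ that you assert. Upper bounds require forward-in-time chains to the reference point $\overline E$ (this is how the paper obtains \eqref{int}); but you cannot run forward-in-time chains inside $\mathcal C_r^-$ near $t=0$, so there is no local upper-bound iteration of the kind you describe.

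The paper's argument goes in the opposite direction: it is a contradiction argument. One first records the rough bound \eqref{int} via forward chains to $\overline E$. Then one shows that if $u(y,t_y)\ge f(r)$ at some point of $\mathcal C_{1/2}$, there must exist a nearby point $(z,t_z)$ at height $<r^\beta$ with $u(z,t_z)\ge f(r/2)$. Indeed, if no such $z$ existed, then $w:=(u-\tfrac12 f(r))^+$ would vanish on $\mathcal A_r$ (by \eqref{int}) and be at most $f(r/2)$ on the sliver; the weak-Harnack decay of step \eqref{step2} then forces $w\le f(r/2)e^{-c r^{1-\beta}}$ at $(y,t_y)$, which contradicts $w(y,t_y)\ge\tfrac12 f(r)$ because $1-\beta<\gamma$. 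Iterating produces a sequence with $u\to\infty$ converging to a point of $\Gamma$, contradicting continuity. The decay estimate is thus used not to push an upper bound \emph{down} toward $\Gamma$, but to show that any putative large value must propagate to an even larger one nearer $\Gamma$.
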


\begin{proof}
We apply an iterative argument similar to the one of Lemma \ref{ML2} above.

Assume $u(\overline E)=1$, and denote by $h_\Gamma$ the distance in the $e_n$ direction between a point $(x,t) \in \mathcal C_1$ and $\Gamma$
$$h_\Gamma(x,t):=x_n-g(x',t).$$
Any point $(x,t) \in \mathcal C_{2/3}$ can be connected to $\overline E$ by a chain of adjacent forward-in-time cubes included in $\mathcal C_r$, so that the size of each cube is proportional to the distance from its center to $\Gamma$ raised to the power $1/\alpha$. The H\"older continuity of $g$ implies that the number of cubes in this chain is proportional to $ (h_\Gamma(x,t))^{1-1/\alpha}$, and by Harnack inequality we find
\begin{equation}\label{int}u \leq e^{\, C_1 h_\Gamma^{1-1/\alpha}} \quad \quad \text{in $\mathcal C_{2/3}$},
\end{equation}
 with $C_1$ universal.

With the same notation as in Lemma \ref{ML2}, we wish to prove that if $r \le r_0$ and
$$u(y,t_y) \geq f(r),$$ for some $(y,t_y) \in \mathcal C_{1/2}$,
then we can find $(z,t_z) \in \mathcal S$, 

$$\mathcal S:=\left\{(x,t)| \quad x'-y'\in (-r,r )^{n-1}, \quad t \in (t_y - r, t_y], \quad 0<h_{\Gamma}(x,t)< r^\beta\right \},$$ such that
$$u(z,t_z) \geq f\left(\frac{r}{2}\right).$$
Since $|(z,t_z)-(y,t_y)|\le C r^\alpha$, we see that for $r$ small enough, we can build a convergent sequence of points $(y_k,t_k) \in \mathcal C_{2/3}$ 
with $u(y_k,t_k)\ge f(2^{-k}r )\to \infty$. This is a contradiction if we assume that $u$ vanishes continuously on $\Gamma$, and is therefore bounded. 
If $u=0$ on $\Gamma$ is understood in the sense that $u$ is the limit of an increasing sequence of continuous subsolutions which vanish on $\Gamma$, then we may apply the argument below to one such subsolution and reach again a contradiction.

To show the existence of the point $z$, assume for simplicity $y'=0$, $t_y=0$ and then $\mathcal S= \mathcal C^-_r \setminus \mathcal A_r$. Let 
$$w:=\left(u- \frac 12 e^{C_0 r^\gamma}\right)^+, \quad \quad \mbox{with} \quad C_0 \gg C_1.$$ 
By \eqref{int} we know that 
$$w=0 \quad \mbox{ in $\mathcal A_r$}.$$ 
If our claim is not satisfied then we apply Weak Harnack inequality for $w$ in cubes of size $2 r^\beta$ repeatedly as in Lemma \ref{ML2}. As we move 
a distance $r$ inside the domain we obtain
\begin{equation}\label{step22}
w \leq f\left(\frac r2\right) \, \, e^{-c_0 r^{1-\beta}} \quad \text{in $\mathcal C^-_{r/2}$}.
\end{equation}
In particular $$\frac 12 f(r) \le w(y,t) \le f\left(\frac r2\right) e^{-c_0 r^{1-\beta}},$$
and we reach a contradiction if $r_0$ is sufficiently small as long as $1-\beta < \gamma$ (which is possible because $\alpha>1/2$).

\end{proof}

\section{Proof of Theorem \ref{T4}}

In this section we assume that \eqref{4000} holds for some $\alpha>0$ possibly small, and in addition $g$ satisfies a one-sided $C^{1/2}$ bound in the 
$t$ variable, i.e.
\begin{equation}\label{lower} g(x',t+s) - g'(x',t ) \ge - K s^{1/2}, \quad \quad \mbox {if $s \ge 0$}.\end{equation}

We will improve the estimates \eqref{step2}, \eqref{step22} of the previous section by applying weak Harnack inequality in parabolic cubes of smaller size 
$\bar r \sim r^{\beta/\alpha}$ (which is the size chosen in the first step to obtain \eqref{step1}) instead of $r^\beta$. Then the oscillation of $w^-$ (or $w$) will decay by a factor $e^{-c r^{1-\beta/\alpha}}$ as we go from $\mathcal C_r^-$ to $\mathcal C_{r/2}^-$.  
However, in cubes of size $\bar r$ we can no longer guarantee the uniform measure estimate of the set where $w^-=0$. To deal with this, we introduce a notion of parabolic 
capacity for the heat equation. This allows us to diminish the oscillation of $w^-$ more precisely than in the measure estimate of Theorem \ref{whi4}.

\begin{defn}\label{Cap}
Let $E$ be a closed set. 
Set, 
$$cap_{Q_1}(E):= \varphi (0,1) $$
where $\varphi $ is the solution to the heat equation in $Q_2(0,1) \setminus (E \cap \overline Q_1)$ which equals 0 on the parabolic boundary of $Q_2(0,1)$ and it is equal to 1 in $E \cap \overline Q_1$.
\end{defn}
The function $\varphi$ is well-defined by the Perron-Wiener-Brelot-Bauer theory (see for example \cite{Fr}). 
Similarly, we can define $cap_{Q_r(x,t)}(E)$ by translating the cube at the origin, and then performing a parabolic rescaling
$$ cap_{Q_r(x,t)}(E):=cap_{Q_1}( \tilde E), \quad \quad \tilde E:=\{(y,s)| (x+r^2 y, t+ rs) \in E \}.$$

We prove here two lemmas about weak Harnack inequality depending on the size of the capacity of $E$ in $Q_1$.
The first lemma states that a solution to the heat equation in $Q_1 \setminus E$ satisfies the Harnack inequality in measure if $E$ has small capacity. 

\begin{lem}\label{l1}
Assume $v \ge 0$ is defined in $Q_1 \setminus E$ and satisfies
$$ v_t=\triangle v.$$ 
Let $$Q^i:=Q_{1/4}(x_i,t_i) \subset Q_1, \quad i=1,2$$ be two cubes of size $1/4$ included in $Q_1$, with $t_2 -t_1 \ge 1/4$. Assume that 
$$cap_{Q_1}(E) \le \delta \quad \mbox{and} \quad \frac{|\{v \ge 1\} \cap Q^1|}{|Q^1|} \ge 1/2,$$
for some $\delta$ small universal. 
Then $$\frac{|\{v \ge c_0\} \cap Q^2|}{|Q^2|} \ge 1/2$$
for some $c_0$ small universal.
\end{lem}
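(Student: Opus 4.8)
\textbf{Proof proposal for Lemma \ref{l1}.}

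The plan is to treat the solution $v$ in $Q_1\setminus E$ as a perturbation of a solution in the full cube $Q_1$, with the ``defect'' caused by $E$ controlled precisely by its parabolic capacity. First I would introduce the capacitary potential $\varphi$ associated to $E$ in $Q_1$ as in Definition \ref{Cap}, so that $\varphi$ solves the heat equation in $Q_2(0,1)\setminus(E\cap\overline Q_1)$, vanishes on $\p_p Q_2(0,1)$, equals $1$ on $E\cap\overline Q_1$, and satisfies $0\le\varphi\le1$ with $\varphi(0,1)=cap_{Q_1}(E)\le\delta$. By the weak Harnack inequality (Theorem \ref{whi2}, or rather its measure form Theorem \ref{whi4}, applied appropriately) the smallness of $\varphi$ at the single point $(0,1)$ upgrades to smallness of $\varphi$ in measure — and then to an $L^p$ bound — on a full subcube, say on $Q_{3/4}$ or on any fixed compact subset of $Q_1$ staying away from $\p_p Q_2(0,1)$; the point is that $\varphi$ is a nonnegative subsolution of the heat equation across $E$ (its distributional Laplacian minus $\p_t$ is a nonpositive measure supported on $E$), so $\|\varphi\|_{L^p(Q^1\cup Q^2)}\le C\delta$ for a small universal $p>0$.

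Next I would compare $v$ with $v+\varphi$. On $Q_1\setminus E$ both $v$ and $\varphi$ solve the heat equation, so $v+\varphi$ does too; moreover I can regard $v+\varphi$ as being defined (as a lower semicontinuous extension, or via the limiting-subsolution convention mentioned after Theorem \ref{T2}) on all of $Q_1$, since on $E$ we have $\varphi=1$ and $v\ge0$, so $v+\varphi\ge1\ge$ its limiting boundary values there; in particular the extension $\tilde w:=v+\varphi$, extended by $\ge 1$ on $E$, is a nonnegative \emph{supersolution} of the heat equation in all of $Q_1$ (the obstacle $E$ only pushes it up). Now $\tilde w\ge v\ge1$ on $\{v\ge1\}\cap Q^1$, so $|\{\tilde w\ge1\}\cap Q^1|\ge\frac12|Q^1|$. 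Applying the \emph{full-cube} weak Harnack inequality for supersolutions (Theorem \ref{Har}, the backward Harnack, combined with the supersolution estimate) — which holds because $\tilde w$ has no exceptional set — and using that $t_2-t_1\ge1/4$ so that $Q^2$ lies forward in time relative to $Q^1$ in the correct way, we obtain a universal $c_1>0$ with $\tilde w\ge c_1$ on a large-measure subset of $Q^2$, in fact $|\{\tilde w\ge c_1\}\cap Q^2|\ge\frac34|Q^2|$ say. (Here one first propagates the measure-density lower bound on $Q^1$ to a pointwise lower bound at an interior point via Theorem \ref{whi4} applied to $1-\tilde w/\|\tilde w\|$ or directly, then spreads it forward in time by Harnack.)

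Finally I would subtract off the potential: on the set $\{\tilde w\ge c_1\}\cap Q^2$ we have $v=\tilde w-\varphi\ge c_1-\varphi$, so $\{v<c_1/2\}\cap Q^2\subset\{\varphi>c_1/2\}\cup(Q^2\setminus\{\tilde w\ge c_1\})$. The second set has measure at most $\frac14|Q^2|$ by the previous step, and the first has measure at most $(c_1/2)^{-p}\|\varphi\|_{L^p(Q^2)}^p\le C\delta(c_1/2)^{-p}\le\frac14|Q^2|$ provided $\delta$ is chosen small enough (universal, since $c_1,p,C$ are universal). Hence $|\{v\ge c_1/2\}\cap Q^2|\ge\frac12|Q^2|$, and the lemma holds with $c_0=c_1/2$.

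The main obstacle I anticipate is the bookkeeping around the extension and the exact form of the weak Harnack inequality needed: one must be careful that $v+\varphi$, once extended across $E$, is genuinely a supersolution \emph{in the full cube} (so that the clean measure-to-point and backward-Harnack estimates of Theorems \ref{whi4} and \ref{Har}, which require no exceptional set, are applicable), and that the geometry of $Q^1,Q^2$ inside $Q_1$ with the time-lag $t_2-t_1\ge1/4$ is compatible with a bounded chain of Harnack applications. The capacity estimate $\|\varphi\|_{L^p}\le C\,cap_{Q_1}(E)$ itself is essentially the definition combined with the subsolution weak Harnack inequality, so it should be routine; the delicate point is coupling it correctly with the supersolution estimate for $v+\varphi$ so that the two small-measure exceptional sets can both be absorbed by choosing $\delta$ small.
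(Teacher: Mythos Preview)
Your strategy is essentially the paper's: correct $v$ by the capacitary potential $\varphi$ so that the sum behaves like a global supersolution, push the measure bound from $Q^1$ forward to $Q^2$ by weak Harnack, and then strip off $\varphi$ using the capacity smallness. The final step --- controlling $|\{\varphi>c_0\}\cap Q^2|$ via Chebyshev and the supersolution weak Harnack applied to $\varphi$ (note: $\varphi$ is a \emph{super}solution across $E$, not a subsolution; your distributional inequality has the right sign but the wrong name) --- matches the paper exactly.

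The gap is the central claim that $\tilde w=v+\varphi$, extended across $E$, is a supersolution in the full cube $Q_1$. You justify this only by ``the obstacle $E$ only pushes it up,'' but $v$ is merely a nonnegative caloric function in $Q_1\setminus E$ with no prescribed behaviour on $E$, so the supersolution property across $E$ is not automatic; and your proposed route through $1-\tilde w/\|\tilde w\|$ and Theorem~\ref{Har} fails because $v$ need not be bounded and Theorem~\ref{Har} is for solutions, not supersolutions. The paper resolves this cleanly by replacing the abstract supersolution claim with a concrete barrier: let $h$ be caloric in $Q_1\setminus K$ with $h=1$ on $K:=\{v\ge1\}\cap Q^1$ and $h=0$ on $\p_p Q_1$, and verify $v\ge h-\varphi$ in $Q_1\setminus E$ by the maximum principle in $Q_1\setminus(K\cup E)$, checking the inequality separately on $\p_p Q_1$ (where $h=0\le v+\varphi$), on $K$ (where $v\ge1\ge h-\varphi$), and on $E$ (where $\varphi=1\ge h$, so $h-\varphi\le0\le v$). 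Then the ordinary weak Harnack for the globally defined $h$ gives $h\ge 2c_0$ pointwise in $Q^2$, which is exactly the lower bound you were trying to extract from $\tilde w$. Once you substitute this comparison with $h$ for your unproven extension step, your argument and the paper's coincide.
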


\begin{proof}
Let $h$ be the solution to the heat equation in $Q_1 \setminus K$ with $h=0$ on the parabolic boundary of $Q_1$, and $h=1$ on $K:=\{v \ge 1\} \cap Q^1$. 
We claim that $$v \ge h-\varphi \quad \mbox{in $Q_1 \setminus E$,}$$
where $\varphi$ is the function from Definition \ref{Cap}. Since both $v$ with $h-\varphi$ solve the heat equation in $Q_1 \setminus (K \cup E)$, it 
suffices to check the claim on the parabolic boundary of $Q_1 \setminus E$ and on $K$.

Indeed, $v \ge 0 \ge h -\varphi$ on $\partial_p Q_1$, and $v\ge 1 \ge h-\varphi$ on $K$. Moreover, $h \le 1 \le \varphi$ on $E$ gives $h-\varphi \le 0 $ on $E$, and since $v \ge 0$ the claim is proved. 

The conclusion follows from the inequality above, since by the Weak Harnack inequality, there exists $c_0$ small universal such that
$h \ge 2 c_0$ in $Q^2$. On the other hand, $\varphi(0,1)= cap_{Q_1}E \le \delta$ implies that $\varphi \le c_0$ in half the measure of $Q^2$ provided that $\delta$ is chosen sufficiently small. \begin{figure}[h] 
\includegraphics[width=0.5 \textwidth]{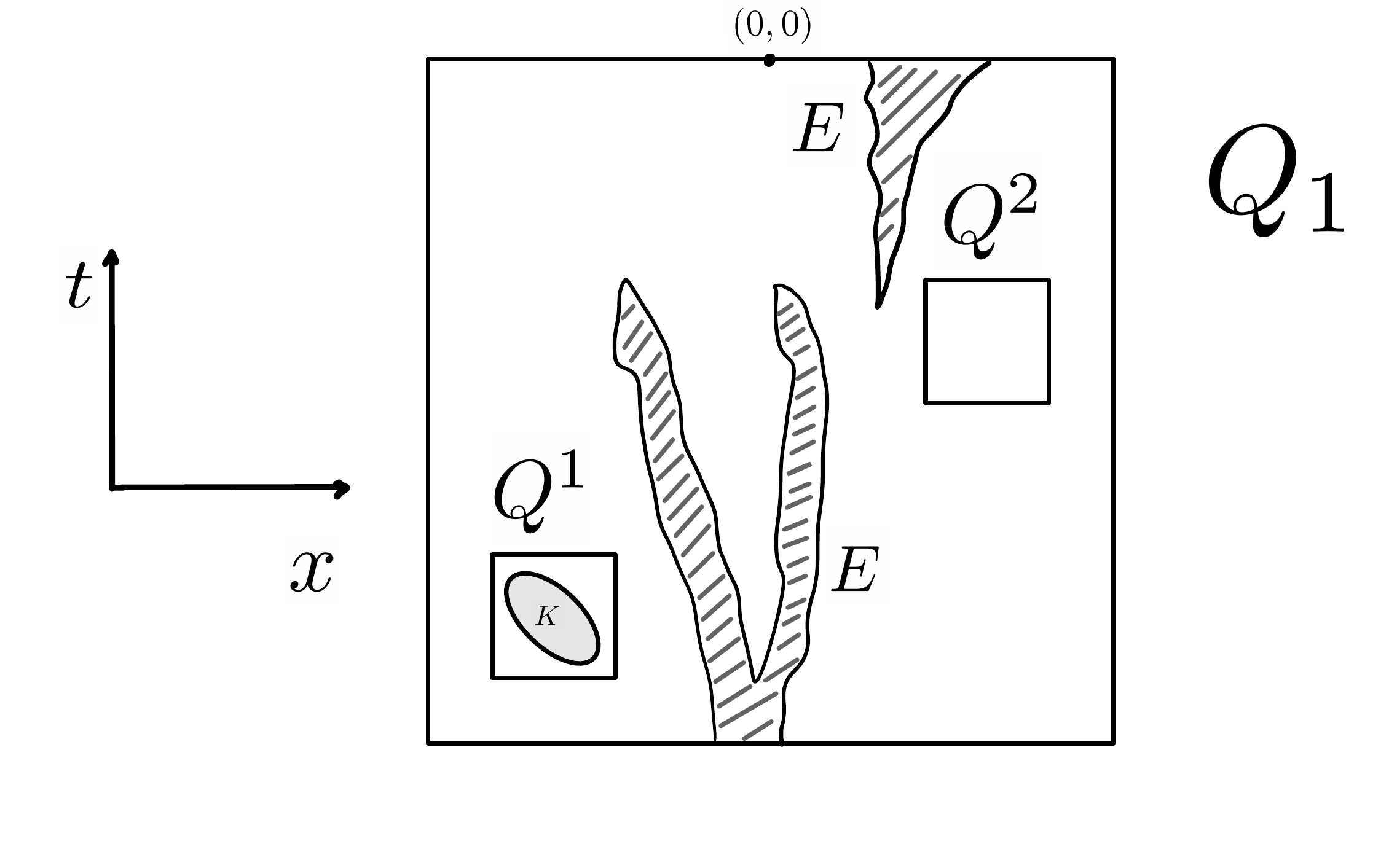}
\caption{Lemma \ref{l1}. }
   \label{fig4}
\end{figure}

\end{proof}

\begin{rem}\label{R40} We may use cubes $Q^i$ of size $\sigma$ and with $t_2 - t_1 \ge \sigma^2$, as long as $\delta$ and $c_0$ are allowed to depend on $
\sigma$ as well.
\end{rem}

The second lemma states that the weak Harnack inequality holds for a subsolution $v\ge 0$ which vanishes on a set $E$ of positive capacity. It follows directly from the definition of $cap_{Q_1}(E)$.
\begin{lem}\label{l2}
Assume that $v \ge 0$ in $Q_2(0,1)$, and 
$$\mbox{$\triangle v \ge v_t$ in $Q_2(0,1)$, and $v=0$ in $E \cap \overline Q_{1}$.}$$ If for some $\delta>0$
$$ cap_{Q_1}(E) \ge \delta,$$ then
$$v(x,t) \le (1-c(\delta)) \|v\|_{L^\infty}, \quad (x,t)\in Q_{1/2}(0,1).$$

\end{lem}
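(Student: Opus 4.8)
\textbf{Proof proposal for Lemma \ref{l2}.}

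The plan is to compare $v$ with a fixed multiple of the capacitary potential $\varphi$ from Definition \ref{Cap}, using the bound on $\|v\|_{L^\infty}$ to normalize and the lower bound on $cap_{Q_1}(E)$ to quantify how much $\varphi$ pushes down the supremum. First I would normalize: after dividing by $\|v\|_{L^\infty}$ we may assume $0\le v\le 1$ in $Q_2(0,1)$ and $v=0$ on $E\cap\overline Q_1$. Let $\varphi$ be the solution to the heat equation in $Q_2(0,1)\setminus(E\cap\overline Q_1)$ with $\varphi=0$ on $\partial_p Q_2(0,1)$ and $\varphi=1$ on $E\cap\overline Q_1$, so that by definition $\varphi(0,1)=cap_{Q_1}(E)\ge\delta$.

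The key comparison is $v\le 1-\varphi$ in $Q_2(0,1)\setminus E$. To see this, note $1-v$ is a supersolution (since $\triangle v\ge v_t$), while $1-\varphi$ solves the heat equation, in the common domain $Q_2(0,1)\setminus(E\cap\overline Q_1)$; by the comparison principle it suffices to check the inequality $v\le 1-\varphi$ on the parabolic boundary of that domain. On $\partial_p Q_2(0,1)$ we have $\varphi=0$ and $v\le 1$, so $v\le 1=1-\varphi$; on $E\cap\overline Q_1$ we have $\varphi=1$ and $v=0$, so $v=0=1-\varphi$. (Points of $\overline Q_1\cap E$ that are irregular for the Dirichlet problem are handled by the Perron--Wiener--Brelot--Bauer construction of $\varphi$, exactly as in the remark following Theorem \ref{T2}; one works with an approximating sequence of subsolutions if needed.) Hence $v\le 1-\varphi$ throughout.

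It remains to propagate the pointwise bound $\varphi(0,1)\ge\delta$ to a uniform lower bound on $\varphi$ in $Q_{1/2}(0,1)$. Since $\varphi\ge 0$ solves the heat equation in $Q_2(0,1)\setminus(E\cap\overline Q_1)$, extending $\varphi$ by $1$ on $E$ yields a supersolution in $Q_2(0,1)$, and one applies the weak Harnack inequality for supersolutions (Theorem \ref{whi2}, in the appropriate backward form / Theorem \ref{Har}): from $\varphi(0,1)\ge\delta$ one gets $\inf_{Q_{1/2}(0,1)}\varphi\ge c(\delta)$ with $c(\delta)>0$. Combining with $v\le 1-\varphi$ gives $v\le 1-c(\delta)$ in $Q_{1/2}(0,1)$, and undoing the normalization yields $v(x,t)\le(1-c(\delta))\|v\|_{L^\infty}$ as claimed. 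The main obstacle I anticipate is the bookkeeping around irregular boundary points of $E$ and the precise form of the weak Harnack estimate needed to pass from a single-point lower bound on the supersolution $\varphi$ to an infimum bound on a smaller cube with the correct time-lag; both are routine given the tools already recorded, but they require a little care with the backward-in-time geometry of the parabolic cubes.
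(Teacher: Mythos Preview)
Your approach is exactly the paper's: normalize, prove $1-v\ge\varphi$ by comparison on $\partial_p Q_2(0,1)$ and on $E\cap\overline Q_1$, then turn the pointwise bound $\varphi(0,1)\ge\delta$ into $\inf_{Q_{1/2}(0,1)}\varphi\ge c(\delta)$.

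The one soft spot is the last step. Treating $\varphi$ (extended by $1$ on $E$) as a global supersolution and invoking the weak Harnack inequality for supersolutions does \emph{not} give what you need: that inequality bounds the $L^p$ mass of a supersolution in the past from above by its value at the top, i.e.\ it controls $\|\varphi\|_{L^p}$ from $\varphi(0,1)$, which is the wrong direction. Likewise Theorem~\ref{Har} needs a large value at an \emph{earlier} time to produce a lower bound at later times, whereas $(0,1)$ is the latest point. So neither cited tool, applied to a mere supersolution, yields $\inf_{Q_{1/2}(0,1)}\varphi\ge c(\delta)$; this is precisely the time-lag issue you flag, and it is not purely bookkeeping.

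The paper closes this gap by using a piece of information you did not invoke: $\varphi=0$ on the lateral boundary of $Q_2(0,1)$. Since $E\cap\overline Q_1\subset\{t\le 0\}$, in the slab $(-2,2)^n\times(0,1]$ the function $\varphi$ is a nonnegative \emph{solution} of the heat equation with zero lateral data. For such Dirichlet solutions a backward (two-sided) Harnack inequality holds on interior cubes, e.g.\ by comparability of the Dirichlet heat kernel $p(x,y,t)$ for $(x,t)\in[-1/2,1/2]^n\times[3/4,1]$ uniformly in $y$; this gives $\varphi(x,t)\ge c\,\varphi(0,1)\ge c\delta$ on $Q_{1/2}(0,1)$. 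Equivalently, extend $\varphi$ by $0$ across the lateral boundary to get a global \emph{sub}solution, use Theorem~\ref{whi2} to find substantial mass of $\varphi$ at some earlier time level, and then push that forward with Theorem~\ref{Har}. Either way, the zero lateral boundary condition is what makes the backward-in-time lower bound possible.
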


\begin{proof}
Assume $\|v \|_{L^\infty}=1$. We compare $1-v$ with $\varphi$ in $Q_2(0,1) \setminus (E \cap \overline Q_1)$ and find $1-v \ge \varphi$.
On the other hand since $\varphi(0,1) \ge \delta$ and $\varphi =0$ on the lateral boundary of $Q_2(1,0) \times [0,1]$ it follows that $\varphi$ satisfies the forward Harnack inequality, and $\varphi \ge c \delta$ in $Q_{1/2}(0,1)$. The same inequality holds for $1-v$ which gives the desired estimate.

\end{proof}

\begin{rem}\label{R41}
We may write the conclusion in $[-1/2,1/2]^n \times [\sigma,1]$ for any $\sigma >0$ provided that the constant $c=c(\delta, \sigma)$ depends on $\sigma$ as well.
\end{rem} 

We are now ready to provide the proof of Theorem \ref{T4}.

\

{\it Proof of Theorem $\ref{T4}$.}
We only show that the exponent in the estimate \eqref{step2} from the previous section can be improved to
\begin{equation}\label{++}
w^- \le e^{-c r^{1- \frac \beta \alpha}},
\end{equation}
by the use of the two lemmas above. The rest of the proof remains the same as before. Notice that now $1- \frac \beta \alpha < \gamma $ holds simply by choosing $\beta >1$ and no restriction on range of the H\"older exponent $\alpha >0$ is needed. 

The same argument improves the exponent in \eqref{step22} from $1-\beta$ to $1- \beta/\alpha$ in the proof of the Carleson estimate.  

We proceed with the proof of \eqref{++}. We set $\bar r:=  r^{\beta/\alpha}$, and 
by hypothesis, the translation by the vector $$T:=(-\bar r e_n, \kappa \,  \bar r ^2) \, \, \in \R^{n+1}$$ maps the complement of $\mathcal C_1$ into itself, provided that $\kappa \le 1/2$ is small depending on the constant $K$ in \eqref{lower}.
Thus if we take a cube and then translate it by $T$, the complement of $\mathcal C_1$ (where $w^-=0$) ``increased" in the translating cube because of \eqref{lower} (see Figure 5).

Decompose the space $\R^{n+1}$ into cubes of size $Q_{\bar r}$ in the following way. Take $Q_{\bar r}$ centered at the origin and then translate it by a 
linear combination of the vectors $\bar r e_i$, $i<n$, $T$ and $\bar r^2 e_{n+1}$ using integer coefficients. 
We look at the behavior of $w$ on arrays of cubes translated by multiples of $T$. Starting with $Q_{\bar r}(0)$, we consider $Q_{\bar r}(mT)$, with $m \in \mathbb Z$. 
When $m \ge C \bar r ^{\alpha -1}$, $Q_{3\bar r }(mT) \subset \mathcal A_r$, and when $m \le - C \bar r ^{\alpha -1}$, $Q_{\bar r}(m T) \subset E$, where $E$ denotes the complement of $\mathcal C_1$. 
Thus, there is an intermediate $m_0$ where $$cap_{Q_{3 \bar r}(mT)}(E)<\delta \quad \mbox{ if and only if $m \ge m_0$.}$$ 
When we decrease $m$ from $C \bar r ^{\alpha -1}$ to $m_0$ we may apply Lemma \ref{l1} in each such $Q_{3 \bar r}(mT)$. The weak Harnack inequality holds in measure in these cubes (see Remark \ref{R40}, with $ \sigma^2 = \kappa /20$), and as in \eqref{step1}, (as there are at most $C \bar r ^{\alpha -1}$ such cubes) we find that 
$$\{w +1 \ge f(r) e^{- C \bar r^{\alpha-1}} \ge 2\}$$ in a fixed proportion of each such $Q_{3\bar r}(mT)$ with $m \ge m_0$. Thus $w^-=0$ in a fixed proportion of $Q_{3 \bar r}(mT)$, and by the weak Harnack inequality
\begin{equation} \label{4000}
\|w^-\|_{L^\infty (Q_{\bar r}(x,t)) } \le (1-c) \|w^-\|_{L^\infty (Q_{6 \bar r}(x,t))} \quad \quad (x,t)=mT + 2 \bar r^2 e_{n+1},
\end{equation}
if $m \ge m_0$.

If $m < m_0$ then the capacity of $E$ in $Q_{3 \bar r}(m T)$ is more than $\delta$. By 
Lemma \ref{l2}, the inequality above remains valid after possibly relabeling $c$. We conclude that 
\eqref{4000} is valid for all cubes centered at $mT + 2 \bar r^2 e_{n+1}$, and in particular for $Q_{\bar r}(2\bar r^2 e_{n+1})$. 

This argument shows that \eqref{4000} holds in fact at all  points $(x,t) \in \mathcal C^-_{3r/4}$. Indeed,
$$ \text{if $|x_n - g(x',t)| > C r^\beta$ then either $Q_{\bar r}
(x,t) \subset \mathcal A_r$ or $Q_{\bar r}(x,t) \subset E$}$$and $\eqref{4000}$ is satisfied trivially as $w^-=0$ in $Q_{\bar r}(x,t)$. 
Otherwise, we argue as above by decomposing the space starting with the cube centered at $(x,t) - 2 \bar r^2 e_{n+1}$ instead of the origin. 
Notice that  $(x,t) \in \mathcal C^-_{3r/4}$ and $|x_n - g(x',t)| \le  C r^\beta$ imply that  $$\text{$Q_{3\bar r }((x,t)-mT) \subset \mathcal A_r$ and $Q_{3\bar r }
((x,t) + mT) \subset E$ when $m \sim C \bar r^{\alpha -1}$ }$$
and the argument applies as before.

In conclusion, the maximum of $w^-$ is decaying a fixed proportion each time we remove the cubes $Q_{6\bar r} $ which are tangent to the parabolic boundary of the infinite cylinder in the $(x',t)$ variables $$\{|x_i| \le 3/4 r, \quad i < n\} \cap \{t \in [-3/4 \, r,0]\}.$$
Thus $$w^- \le e^{-cr\bar r^{-1}} \quad \mbox{in $\mathcal C_{r/2}^-$,}$$
as desired, and \eqref{++} is proved.

\qed

\begin{figure}[h] 
\includegraphics[width=0.5 \textwidth]{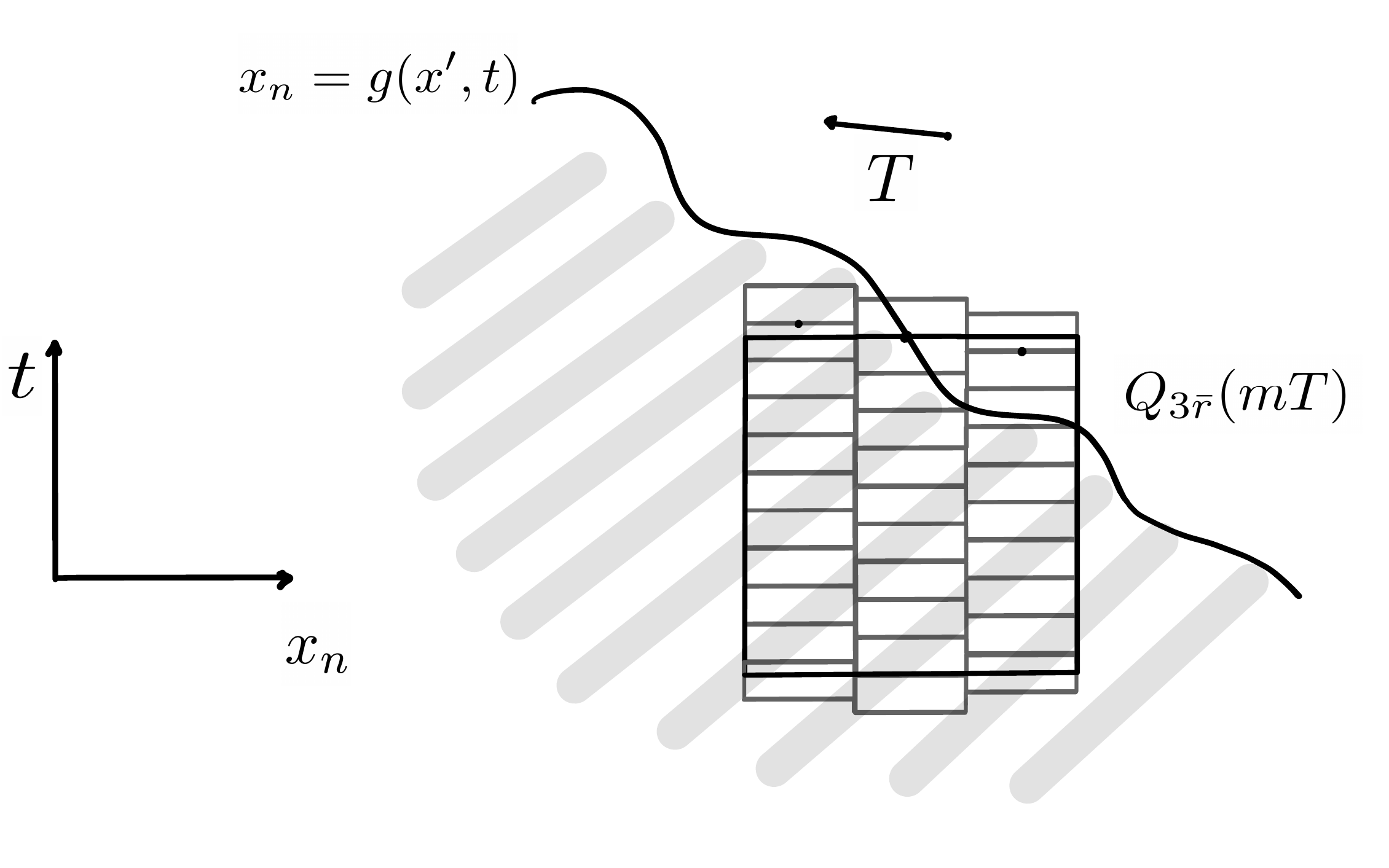}
\caption{ Proof of Theorem \ref{T4}.}
   \label{fig5}
\end{figure}

\end{document}